\newcommand{\R}{\mathbb{R}}
\DeclareMathOperator{\diag}{diag}
\theoremstyle{plain}
\newtheorem{theorem}{Theorem}[section]
\newtheorem{lemma}[theorem]{Lemma}
\theoremstyle{definition}
\newtheorem*{remark}{Remark}
\title{\textbf{Towards computing high-order p-harmonic\\ descent directions and their limits\\ in shape optimization}}
\author[]{Henrik Wyschka\thanks{henrik.wyschka@uni-hamburg.de}\;}
\author[]{Martin Siebenborn}
\affil[]{Department of Mathematics, University of Hamburg\protect\\ Bundesstr. 55, 20146 Hamburg, Germany}
\date{}
\begin{document}
\maketitle

\begin{abstract}
    \noindent We present an extension of an algorithm for the classical scalar $p$-Laplace Dirichlet problem to the vector-valued $p$-Laplacian with mixed boundary conditions in order to solve problems occurring in shape optimization using a $p$-harmonic approach.
    The main advantage of the proposed method is that no iteration over the order $p$ is required and thus allow the efficient computation of solutions for higher orders.
    We show that the required number of Newton iterations remains polynomial with respect to the number of grid points and validate the results by numerical experiments considering the deformation of shapes.
    Further, we discuss challenges arising when considering the limit of these problems from an analytical and numerical perspective, especially with respect to a change of sign in the source term.
\end{abstract}

\section{Introduction}
\label{sec::Introduction}

Shape optimization constrained to partial differential equations is a  vivid field of research with high relevance for industrial grade applications. 
Mathematically, we consider the problem
\begin{equation*}
    \begin{aligned}
        \min_{\Omega \in \mathcal{A}}\quad & J(v_s,\Omega)\\ 
        \text{s.t.}\quad & \mathcal{E}(v_s,\Omega) = 0
    \end{aligned}
    \label{eq:ShapeOptProblem}
\end{equation*}
where $J$ denotes the objective or shape functional depending on a state variable $v_s$ and a Lipschitz domain $\Omega \subset \R^d$, which is to minimize over the set of admissible shapes $\mathcal{A}$. 
Further, the state and the domain have to fulfill a PDE constraint $\mathcal{E}$.
A common solution technique for this type of problem is to formulate it as a sequence of deformations to the initial shape \cite{sokolowski1992}. 
Each of these transformations has the form 
\begin{equation}
    \tilde{\Omega} = (\mathrm{id}+tv)(\Omega)
    \label{eq:PerturbationOfIdentity}
\end{equation}
for a step-size $0 < t \leq 1$ and a descent vector field $v:\R^d \rightarrow \R^d$ in the sense $J^\prime(\Omega)\, v < 0$ with shape derivative denoted by $J^\prime$.
Hence, the set $\mathcal{A}$ is implicitly defined by all shapes reachable via such transformations to the initial shape.
From the analytical derivation of this procedure it is required that the $v$ are at least $W^{1,\infty}(\R^d,\R^d)$ inherently ensuring that all admissible shapes remain Lipschitz. 
However, in practice this condition is often neglected and so-called Hilbert space methods \cite{allaire2021} are used.
The most prominent example of those consists of finding $v \in W^{k,2}(\Omega,\R^d)$.
This results in smooth shapes, which are relevant for some applications, but not necessarily optimal.
Further, the mesh quality often deteriorates and it would require a high differentiability order $k$ to obtain a Lipschitz transformation from the Sobolev embedding.\\

\noindent Recent development suggests using a $p$-harmonic approach \cite{deckelnick2021} to determine descent directions. 
This technique can be obtained by considering the steepest descent in the space of Lipschitz transformations
\begin{equation}
    \underset{\substack{v\in W^{1,\infty}(\Omega,\R^d)\\ \lVert \nabla v \rVert_{L^{\infty}} \leq 1}}{\mathrm{arg\;min}}\; J^\prime(\Omega)\, v.
    \label{eq:LipschitzSteepestDescent}
\end{equation}
and then potentially relax the problem.
The descent field is obtained by the solution of a minimization problem for $2 \leq p < \infty$ reading
\begin{equation}
    \underset{v \in W^{1,p}(\Omega,\R^d)}{\mathrm{arg\;min}}\; \frac{1}{p} \int_{\Omega} \lVert \nabla v \rVert_2^p \;\mathrm{d} x + J^\prime(\Omega)\, v
    \label{eq:shapeDescentMinimization}
\end{equation}
Since the classical Hilbert space setting is recovered for the linear case $p=2$, this approach can also be understood as a regularization of such methods.
It is demonstrated that this approach is superior in terms of representation of sharp corners as well as the overall mesh quality and yields improving results for increasing $p$ \cite{mueller2021}.
On the downside, the stated numerical examples also made clear that it is challenging to compute solutions for $p>5$ due to serious difficulties in numerical accuracy and the need to iterate over increasing $p$ with the presented solution technique.\\

\noindent We now assume that the shape derivative $J^\prime(\Omega)$ can be expressed via integrals over the domain and the boundary.
From the perspective of shape calculus \cite{delfour2011, sokolowski1992} this is a reasonable choice to cover relevant applications.
An exemplary computation including geometric constraints can be found in \cite{schulz2016}.
For the finite setting we recover the general formulation of a problem for the $p$-Laplacian $\Delta_p v = \nabla \cdot (\lVert \nabla v \rVert _2^{p-2} \nabla v)$ reading
\begin{equation}
    \underset{u \in \mathcal{U}^p}{\mathrm{arg\;min}}\; J_p(u) = \frac{1}{p} \underbrace{\int_{\Omega} \lVert \nabla (u+g) \rVert_2^p}_{ =:\, \lVert u+g \rVert_{X^p(\Omega)}^p} \;\mathrm{d} x - \int_{\Gamma} h u  \;\mathrm{d}\Gamma - \int_{\Omega} f u \;\mathrm{d} x \quad
    \label{eq:pLaplaceZeroTraceMinimization}
\end{equation}
over the set $\mathcal{U}^p := \{ u \in W^{1,p}(\Omega,\R^d):\, u=0 \text{ a.e. on } \partial\Omega\setminus\Gamma\}$ with $v := u + g$ for an arbitrary prolongation of $g$ to the whole domain in $\{ v \in W^{1,p}(\Omega,\R^d):\, v=g \text{ a.e. on } \partial\Omega\setminus\Gamma\}$.
The corresponding Euler-Lagrange equation is given by
\begin{equation*}
    \left.\begin{array}{ll}
        -\Delta_p v = f &\text{in } \Omega,\\
        \lVert\nabla v \rVert_2^{p-2} \partial_{\eta} v = h & \text{on } \Gamma,\\
        v=g &\text{on } \partial\Omega\setminus\Gamma
    \end{array}\right\}.
    \label{eq:pLaplaceEulerLagrange}
\end{equation*}

\noindent Consequently, in order to obtain an efficient shape optimization algorithm, it is necessary to find a solid routine to solve this problem for a preferably high order $p$. 
On the other hand, it is desirable to directly solve the non-relaxed limit case for $p=\infty$ to obtain analytically valid and possibly superior results.\\

\noindent The remainder is structured as follows: 
In section \ref{sec:HighOrderDescent} we construct the vector-valued extension of the algorithm and integrate support for Neumann boundary conditions. 
We present numerical results obtained with this algorithm in section \ref{sec:NumericalResults}.
After that we discuss we discuss a version of the presented algorithm for $p=\infty$ before we draw conclusions in section \ref{sec:Conclusion}.
\section{High-order \texorpdfstring{$p$}{p}-harmonic descent}
\label{sec:HighOrderDescent}

\noindent In \cite{loisel2020} an algorithm to solve scalar $p$-Laplace problems with Dirichlet boundary conditions is presented in order to show it is solvable in polynomial time.
The approach relies on interior-point methods and the theory of self-concordant barriers, including estimates in terms of the barrier parameter given by Nesterov \cite{nesterov2004}.
Besides the computational complexity, one of the main advantages is that the solution to the linear Laplacian is a sufficient initial guess for any $p$ and no iteration over $p$ is required.
We construct an extension to the algorithm for vector-valued functions featuring mixed Dirichlet and Neumann boundary conditions in order to apply it to shape deformation problems.
However, we will only introduce minor changes to the original proof and show that the polynomial estimate holds with mixed boundary conditions in the scalar setting.\\

\noindent First, we recall some basic notation for finite elements. 
Let $h_\Omega$ be a parameter and $T_{h_\Omega}$ a triangulation of $\Omega$ with $n$ nodes, $m$ elements and quasi-uniformity parameter $\rho_{\Omega}$. 
Further let $V_{h_\Omega}$ denote the space of piece-wise linear Lagrange elements over $T_{h_\Omega}$.
The vector-valued finite element coefficient vector $u \in \R^{nd^\prime}$ is given by a $d'$-block for each node $k$ associated with a basis function $\Phi_k$.
This allows us to extend the notion of discrete derivative matrices to vector-valued functions by $D^{(j,r)} \in \mathbb{R}^{m\times nd^\prime}$ with entries $D_{i,d^\prime(k-1)+r}^{(j,r)} = \frac{\partial}{\partial x_j}\Phi_k(x^{(i)})$ for elements $i=1,\ldots,m$ and nodes $k=1,\ldots,n$ being non-zero if $k \in \mathrm{spt}\; i$. 
Meaning the multiplication to a coefficient vector returns the discrete derivative in direction $j$ of the $r$-th image dimension on each element midpoint $x^{(i)}$.
The vector of weights is given by $\omega^{(l)}$, where $l$ denotes the number of local quadrature points. 
This also means for triangular elements using the mid-point rule $\omega^{(1)} = \omega$ is the vector of element volumes.
The discretization of the $p$-Laplacian term from the problem (\ref{eq:pLaplaceZeroTraceMinimization}) is then given by
\begin{equation*}
    \lVert u+g \rVert_{X^p(\Omega)}^p = 
    \sum_{i=1}^m \omega_i \left( \sum_{j=1}^d \sum_{r=1}^{d^\prime} [D^{(j,r)}(u+g)]_i^2 \right)^\frac{p}{2}.
\end{equation*}

\noindent Further, we define basis matrices $E \in \R^{mld^\prime \times n d^\prime}$ returning the function value for all image dimensions on the $l$ local quadrature points of all elements on multiplication to a coefficient vector. 
This discrete operator will later simplify the proof by allowing us apply similar techniques as those for $D^{(j,r)}$ also for the occurring mass matrices $M=[E^{(l)}]^\intercal W^{(l)} E^{(l)}$ where $W^{(l)} = \mathrm{diag}(\omega^{(l)})$.\\

\noindent Note that all definition hold similarly for boundary elements and will be denoted by a bar, e.g. $\bar{M} = [\bar{E}^{(l)}]^\intercal \bar{W}^{(\bar{l})} \bar{E}^{(l)} \in \R^{nd^\prime \times nd^\prime}$.\\

\noindent With these connections established, we can now state the discretized and reformulated problem for the vector-valued $p$-Laplacian with mixed Dirichlet and Neumann boundary data in the following lemma.

\begin{lemma}
    The problem (\ref{eq:pLaplaceZeroTraceMinimization}) of minimizing $J_p(u)$ over the given finite element space $V_h$ with $1 \leq p < \infty$ satisfying the additional upper bound $\omega_i \lVert \nabla(u+g)\vert_{K_i} \rVert_2^p \leq R$ is equivalent to the classical convex problem
    \begin{equation}
        \min_{x \in \mathcal{Q}_p} \langle c,x \rangle \text{ with } c = 
        \begin{bmatrix}
            -Mf-\bar{M}h\\ 
            \frac{\omega}{p}
        \end{bmatrix}
        \label{eq:ReformulatedFiniteMinimization}
    \end{equation}
    with the constrained search set given by
    \begin{equation}
        \mathcal{Q}_p = \left\{ (u,s) \in \R^n \times \R^m \, : \, s_i \geq \left(\sum_{j=1}^{d} \sum_{r=1}^{d'} [D^{(j,r)}(u+g)]_i^2\right)^{\frac{p}{2}} \land \omega_i s_i \leq R \right\}.
        \label{eq:FiniteConstrainedSet}
    \end{equation}
\end{lemma}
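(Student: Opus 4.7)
The plan is to rewrite $J_p(u)$ term by term in matrix–vector form using the discrete operators defined above, then introduce a slack variable $s\in\mathbb{R}^m$ that converts the nonlinear $p$th-power term in the objective into a linear functional with a convex inequality constraint.

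First, I would discretize each of the three contributions to $J_p$. The volume source $\int_\Omega f u\,dx$ and the boundary source $\int_\Gamma h u\,d\Gamma$ are treated by the chosen quadrature rules with the mass matrices $M=[E^{(l)}]^\intercal W^{(l)} E^{(l)}$ and $\bar{M}=[\bar{E}^{(l)}]^\intercal \bar{W}^{(\bar{l})}\bar{E}^{(l)}$ already defined, giving $\langle Mf,u\rangle$ and $\langle \bar{M}h,u\rangle$ respectively. For the $p$-Laplacian term, I would invoke the identity
\begin{equation*}
    \tfrac{1}{p}\lVert u+g \rVert_{X^p(\Omega)}^p
    = \tfrac{1}{p}\sum_{i=1}^m \omega_i \Bigl(\sum_{j=1}^d\sum_{r=1}^{d'} [D^{(j,r)}(u+g)]_i^2\Bigr)^{p/2},
\end{equation*}
which was stated just before the lemma. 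At this stage $J_p(u)$ has been expressed as the sum of a concave-free linear part in $u$ and a strictly convex nonlinear part summed over elements.

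Next, I would pass from $J_p(u)$ to a constrained problem in $(u,s)\in\mathbb{R}^n\times\mathbb{R}^m$ by epigraph reformulation: introduce $s_i\geq 0$ with the inequality
\begin{equation*}
    s_i \;\geq\; \Bigl(\sum_{j,r}[D^{(j,r)}(u+g)]_i^2\Bigr)^{p/2},
\end{equation*}
so that $\tfrac{1}{p}\sum_i \omega_i s_i$ upper-bounds the $p$-Laplacian term, and replace that term by $\tfrac{1}{p}\langle\omega,s\rangle$ in the objective. The resulting objective is $\langle c,(u,s)\rangle$ with $c$ exactly as stated in \eqref{eq:ReformulatedFiniteMinimization}. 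The additional upper bound $\omega_i s_i \leq R$ is simply appended, yielding the feasible set $\mathcal{Q}_p$ in \eqref{eq:FiniteConstrainedSet}; one notes that $\mathcal{Q}_p$ is convex because $t\mapsto t^{p/2}$ is convex for $p\geq 1$ and the inner squared norm is convex in $u$.

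The only non-automatic step, and the one I would spell out carefully, is the equivalence of the two problems. Because the coefficient vector $\omega/p$ acting on $s$ is strictly positive and $s$ is otherwise unconstrained from below except by the epigraph inequality, any minimizer of $\langle c,(u,s)\rangle$ over $\mathcal{Q}_p$ must saturate $s_i=(\sum_{j,r}[D^{(j,r)}(u+g)]_i^2)^{p/2}$ for every $i$; otherwise decreasing $s_i$ would strictly decrease the objective while keeping feasibility. Substituting this identity back into $\langle c,(u,s)\rangle$ recovers $J_p(u)$ verbatim, so the two problems have the same $u$-minimizers. The side condition $\omega_i s_i\leq R$ is, by the assumption of the lemma, compatible with the original minimizer, so it does not cut away the optimum. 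The vector-valued and mixed-boundary aspects enter only through the shape of the matrices $D^{(j,r)}$, $M$, $\bar{M}$, which were defined to absorb these generalizations without altering the structural argument.
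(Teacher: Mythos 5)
Your epigraph reformulation with the saturation argument ($\omega_i>0$ forces $s_i$ to attain its lower bound at any minimizer) is exactly the intended justification; the paper itself states this lemma without proof, deferring to the construction in \cite{loisel2020}, and your argument fills that in correctly. Two trivial quibbles: the element-wise term $(\sum_{j,r}[D^{(j,r)}(u+g)]_i^2)^{p/2}$ is convex but not strictly convex in $u$, and the constraint $\omega_i s_i\leq R$ is not merely ``compatible with'' the minimizer --- after saturation it is the exact image of the bound $\omega_i\lVert\nabla(u+g)\vert_{K_i}\rVert_2^p\leq R$ already imposed in the lemma's statement, so the two feasible sets correspond precisely.
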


\noindent The obtained problem is now a classical convex optimization problem by the minimization of a scalar product over a constrained set. 
An algorithm is obtained by constructing a self-concordant barrier for $\mathcal{Q}^p$, computing its first and second derivative and then applying an interior-point method \cite{nesterov2004}. 

\begin{lemma}
    \label{lem:FiniteBarrierFunction}
    A $4m$-self-concordant barrier for $\mathcal{Q}_p$ is given by the function
    \begin{equation*}
        \begin{gathered}     
            F(u,s) = -\sum_i \log z_i - \sum_i \log \tau_i \quad\text{where}\\
            z_i = s_i^{2/p} - \sum_{j=1}^{d}\sum_{r=1}^{d^\prime}[(\underbrace{D^{(j,r)} u + D^{(j,r)} g}_{=: y^{(j,r)}})_i]^2 \quad\text{and}\quad \tau_i = R-\omega_i s_i.
        \end{gathered}
    \end{equation*}
\end{lemma}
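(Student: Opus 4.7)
The strategy rests on two canonical properties of $\nu$-self-concordant barriers (see Nesterov \cite{nesterov2004}): \emph{additivity}, which states that the sum of a $\nu_1$- and a $\nu_2$-self-concordant barrier on convex sets $Q_1$, $Q_2$ is $(\nu_1+\nu_2)$-self-concordant on $Q_1 \cap Q_2$; and \emph{affine invariance}, which states that pre-composition with an affine substitution preserves both the class and the parameter. Writing $F = \sum_{i=1}^m F_i$ with $F_i := -\log z_i - \log \tau_i$, additivity reduces the lemma to proving that each $F_i$ is $4$-self-concordant on its own admissible region.

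For the half-space factor, $\tau_i = R - \omega_i s_i$ is affine in $s_i$, so $-\log \tau_i$ is the standard logarithmic barrier composed with an affine map and is $1$-self-concordant, contributing $m$ to the total. The interesting factor is $-\log z_i$ with $z_i = s_i^{2/p} - \|y_i\|_2^2$, where $y_i := (y^{(j,r)}_i)_{j,r} \in \R^{dd'}$ depends affinely on $u$. By affine invariance I may treat $y_i$ as a fresh coordinate block, so it suffices to prove that
\[
    B(s, y) := -\log\bigl(s^{2/p} - \|y\|_2^2\bigr)
\]
is a $3$-self-concordant barrier on the power-cone region $\{(s, y) \in \R_{>0} \times \R^{dd'} : \|y\|_2^2 < s^{2/p}\}$. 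Combining $1+3=4$ per element and summing over the $m$ elements then yields the claimed parameter $4m$.

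To establish the $3$-self-concordance of $B$, I would exploit that $B$ depends on $y$ only through $\|y\|_2^2$: it is invariant under the orthogonal group acting on $y$, so all self-concordance quantities are unchanged after rotating $y$ into any convenient direction. For a test direction $h = (h_s, h_y)$ one rotates so that $y = \|y\|_2 e_1$ and $h_y = \alpha e_1 + \beta e_2$ with $e_2 \perp e_1$, thereby reducing every quantity to a calculation in the three scalar variables $(s, y_1, y_2)$ with $B = -\log(s^{2/p} - y_1^2 - y_2^2)$. In these coordinates the two defining inequalities
\[
    \bigl(B'[h]\bigr)^2 \le 3\, B''[h,h], \qquad \bigl|B'''[h,h,h]\bigr| \le 2 \bigl(B''[h,h]\bigr)^{3/2}
\]
are verified by direct differentiation, extending the scalar, $y_1$-only calculation of \cite{loisel2020} by carrying the additional $\beta$-contributions along $e_2$.

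The principal obstacle is precisely this last verification: the tangential direction $e_2$ does not appear in Loisel's scalar setting, and one must confirm that the extra quadratic-in-$\beta$ terms in $B''[h,h]$ and cubic-in-$(\alpha,\beta)$ terms in $B'''[h,h,h]$ do not degrade the constants obtained for the purely radial case. A robust fallback is to invoke the known self-concordance of generalised power-cone barriers from the interior-point literature as a black box, which yields the same per-element parameter $3$ after affine reduction and closes the argument without the explicit tensor calculation.
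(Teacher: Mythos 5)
The paper does not actually prove this lemma --- it takes the barrier and its parameter from \cite{loisel2020} and only records the modified first and second derivatives in the subsequent remark --- so there is no internal proof to measure you against. Judged on its own terms, your outline contains one correct and genuinely useful ingredient, namely the reduction of the vector-valued case to a low-dimensional computation via the orthogonal invariance of $z_i$ in the variables $y_i=(y_i^{(j,r)})_{j,r}$: since $z_i$ depends on $u$ only through $\lVert y_i\rVert_2^2$ and $y_i$ is affine in $u$, rotating $y_i$ and the test direction into a two-dimensional subspace is exactly the right way to lift the scalar computation of \cite{loisel2020} to the vector-valued setting. The additivity and affine-invariance reductions are likewise standard and fine.

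The gap is the central claim that $B(s,y)=-\log\bigl(s^{2/p}-\lVert y\rVert_2^2\bigr)$ is by itself a $3$-self-concordant barrier on $\{\lVert y\rVert_2^2<s^{2/p}\}$. This is false for $p>2$, which is the case of interest. Restrict $B$ to the ray $y=0$, $s>0$, which lies in the interior of the domain: there $B(s,0)=-\tfrac{2}{p}\log s$, and for $\phi(s)=-c\log s$ one has $|\phi'''(s)|=2c\,s^{-3}$ while $2(\phi''(s))^{3/2}=2c^{3/2}s^{-3}$, so the self-concordance inequality $|\phi'''|\le 2(\phi'')^{3/2}$ holds only for $c\ge 1$; with $c=2/p$ it fails whenever $p>2$. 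This degeneracy at $y=0$ is precisely why the classical barriers for power cones and epigraphs of $|x|^p$ in the interior-point literature carry an additional $-\log s$ (or $-2\log s$) term. Your fallback of invoking those results as a black box therefore does not close the argument either: it would hand you a different barrier, with the extra logarithm and a correspondingly different parameter count, not the function $F$ of the lemma. Nor can the half-space factor $-\log\tau_i$ rescue the estimate, since $\tau_i=R-\omega_i s_i$ stays bounded away from zero as $s_i\searrow 0$ and its derivatives cannot dominate the $s^{-3}$ blow-up of the third derivative of $-\log z_i$. A correct argument must treat the element barrier as a coupled object --- for instance via the Nesterov--Nemirovskii composition rules for the concave function $s\mapsto s^{2/p}$ together with a barrier for $s>0$, as in \cite{loisel2020} and \cite{nesterov2004} --- rather than splitting off $-\log z_i$ as an independently self-concordant summand with parameter $3$.
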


\begin{remark}
    By construction the barrier function $F$ is twice differentiable with the first derivative reading
    \begin{equation*}
        \begin{aligned}
            F' &= \begin{bmatrix} 
                F_u \\ 
                F_s 
            \end{bmatrix} 
            \text{ where }\\
            F_{u} &= 2 \sum_{j=1}^{d} \sum_{r=1}^{d'} [D^{(j,r)}]^\intercal \frac{y^{(j,r)}}{z} 
            \quad\text{and}\quad
            F_{s} = -\frac{2}{p}\frac{1}{z}s^{2/p-1} + \frac{\omega}{\tau}.
        \end{aligned}
    \end{equation*}
    The second derivative is given by 
    \begin{equation*}
        \begin{aligned}
            F'' &= \begin{bmatrix} F_{uu} & F_{us} \\ F_{us}^\intercal & F_{ss} \end{bmatrix} \text{ where}\\
            F_{uu} &= 2 \sum_{j=1}^{d} \sum_{r=1}^{d'} [D^{(j,r)}]^\intercal Z^{-1} D^{(j,r)}\\
                &\quad+ 4 \sum_{j_1=1}^{d} \sum_{r_1=1}^{d'}\sum_{j_2=1}^{d} \sum_{r_2=1}^{d'} (Y^{(j_1,r_1)}D^{(j_1,r_1)})^\intercal Z^{-1} D^{(j)} (Y^{(j_2,r_2)}D^{(j_2,r_2)}),\\
            F_{us} &= -\frac{4}{p}\sum_{j=1}^{d} \sum_{r=1}^{d'} (Y^{(j,r)}D^{(j,r)})^\intercal Z^{-2} S^{2/p-1},\\
            F_{ss} &= -\frac{2}{p} \left(\frac{2}{p} - 1 \right) Z^{-1} S^{2/p-2} + \frac{4}{p^2} Z^{-2} S^{4/p-2} + W^2 T^{-2},\\
            S &= \diag(s),\, W = \diag(\omega),\, Y = \diag(y),\, Z = \diag(z),\, T = \diag(\tau).
        \end{aligned}
    \end{equation*}
\end{remark}
\begin{remark}
    Note that in the construction the $\frac{p}{2}$-th power of the norm has been moved. 
    Thus the additional constrained $s_i \geq 0$ would be required, as stated in the original version. 
    However, $z_i \rightarrow \infty$ as $s_i \searrow \lVert \nabla(u+g)\vert_{K_i} \rVert_2^2$ or $s_i \nearrow -\lVert \nabla(u+g)\vert_{K_i} \rVert_2^2$ and thus leave us with a correct barrier on the intended set as well additional separated set. Therefore, we drop that condition here and in practice we ensure it by the choice of the initial value with $s_i \geq 0$ and keep track via the line-search in the adaptive path-following. This not only simplifies the notation, but also reduces the computational effort.
\end{remark}

\noindent For completeness, we state the proof for the computational complexity in the scalar case with additional Neumann boundary conditions. 
Note that the obtained bound on the iterations differs from the one in the original version \cite[Theo. 1]{loisel2020}. 
This stems from a change we have to introduce to the Hessian and subsequent computations as well as using a different estimate in terms of the barrier parameter, which features $\lVert c \rVert^*_{x^*_F}$ instead of $\lVert \hat{x} \rVert^*_{x^*_F}$.
However, the estimate on the required iterations for a naive interior-point method is only of theoretical interest. 
Even though the bound is not sharp, the required iterations are not reasonable for practical applications.
Therefore, variations with modified step length \cite{nesterov2001} or an adaptive step size control \cite{loisel2020} are used.
Although estimates are worse in this setting, we will see for the latter in section \ref{sec:NumericalResults} a significantly improved performance.
Consequently, we will not show theoretical results for the vector-valued setting.\\

\begin{theorem}
    Let $1 \leq p <\infty$. Assume that $\Omega \subset \R^{d}$ is a Lipschitz polytype of width L and that $T_{h_\Omega}$ is a quasi-uniform triangulation of $\Omega$, parametrized by $0 < h_\Omega < 1$ and with quasi-uniformity parameter $1 \leq \rho_\Omega < \infty$. 
    Further assume $g \in W^{1,p}(\Omega)$, $f \in \mathrm{L}^q(\Omega)$  and $h \in \mathrm{L}^q(\Gamma)$ with conjugated exponents $\frac{1}{p} + \frac{1}{q} = 1$ are piece-wise linear on $T_{h_\Omega}$ and let $V_{h_{\Omega}} \subset W_{0}^{1,p}(\Omega)$ be the piece-wise linear finite element space on $T_{h_{\Omega}}$ whose trace vanishes.
    Fix a quadrature $Q$ with positive weights such that the integration is exact, $R \geq R^* := 2(1 + \lVert g \rVert^p_{X^p(\Omega)})$ sufficiently large and let $\varepsilon > 0$ be an accuracy.\\
    
    \noindent In exact arithmetic, a naive interior-point method consisting of auxiliary and main path-following using the barrier function from lemma \ref{lem:FiniteBarrierFunction} to minimize $J_p(u)$ over $u \in V_{h}$, starting from $\hat{x} = (0,\hat{s})$ with $\hat{s}_i = 1 + (\sum_j \sum_r [D^{(j,r)}g]_i^2)^{p/2}$, converges to the global minimizer in $V_h$ in at most
    \begin{equation*}
        \begin{aligned}
            N &\leq 14.4 \sqrt{|\Omega|d! h^{-d}} [K^* + \log( \varepsilon^{-1} h_{\Omega}^{-1-7.5d} R^5 (1 + \lVert g \rVert_{X^p(\Omega)})(\lVert f \rVert_{\mathrm{L}^q(\Omega)} + \lVert h \rVert_{\mathrm{L}^q(\Gamma)} + 1))].
        \end{aligned}
    \end{equation*}
    iterations. 
    This results in a computational complexity denoted by $\mathcal{O}(\sqrt{n} \log(n))$.
    The constant $K^{*} = K^{*}(\Omega,\rho_{\Omega}, Q)$ depends on the domain $\Omega$, the quasi-uniformity parameter $\rho_{\Omega}$ of the triangulation and the quadrature $Q$. 
    At convergence, $u$ satisfies
    \begin{equation*}
        J_p(u) \leq \min_{\substack{v\in V_{h_\Omega}\\ \frac{1}{p}\lVert v+g \rVert^p_{X^p} \leq R}} J_p(v) + \varepsilon.
    \end{equation*}
\end{theorem}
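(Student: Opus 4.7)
The plan is to reproduce the architecture of \cite[Thm.~1]{loisel2020} while incorporating the Neumann contribution $-\bar{M}h$ into the cost vector $c$ and switching to the Nesterov estimate that measures the dual norm of $c$ instead of the starting point $\hat{x}$. At the top level, one invokes Nesterov's complexity theorem \cite{nesterov2004} for a naive interior-point (auxiliary plus main path-following) method applied to $\min_{x\in\mathcal{Q}_p}\langle c,x\rangle$ using the $4m$-self-concordant barrier $F$ of Lemma~\ref{lem:FiniteBarrierFunction}. This yields an iteration count of the form
\begin{equation*}
    N \;\leq\; c_{0}\sqrt{4m}\,\bigl(K^{*} + \log(\nu\,\|c\|^{*}_{x^{*}_{F}}/\varepsilon)\bigr) + \text{lower-order terms},
\end{equation*}
where $x^{*}_{F}$ denotes the analytic centre of $\mathcal{Q}_p$ and $K^{*}$ absorbs the cost of the auxiliary centring phase. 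Bounding the number of simplices by quasi-uniformity as $m\le |\Omega|\,d!\,h_\Omega^{-d}$, with the residual $\rho_\Omega$-dependence swept into $K^{*}$, produces the advertised prefactor $14.4\sqrt{|\Omega|\,d!\,h^{-d}}$.

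I would first reuse, essentially verbatim from \cite{loisel2020}, the verification that $F$ is a barrier on $\mathrm{int}\,\mathcal{Q}_p$, the feasibility of $\hat{x}=(0,\hat{s})$ under the condition $R\ge R^{*}$ (which guarantees $z_i,\tau_i>0$ at the starting point), and the estimate of $K^{*}$: these depend only on the geometry of $\mathcal{Q}_p$ and on $\hat{x}$, not on the cost vector, and are therefore unaffected by introducing the Neumann term. The block structure of $F''$ displayed in the remark also has no boundary dependence, so the lower bound $F''(x^{*}_{F})\succeq \beta(h_\Omega,\rho_\Omega,R)\,I$ from the scalar Dirichlet case can be transported; here one must only verify that the modified Hessian entry flagged in the paragraph preceding the theorem does not spoil the estimate (only the constants in $\beta$ change).

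The genuinely new step, and the main obstacle, is bounding $\|c\|^{*}_{x^{*}_{F}}$ for $c=[-Mf-\bar{M}h;\,\omega/p]$. For the interior block I would retain the estimate $\|Mf\|_2\lesssim h_\Omega^{d/2}\|f\|_{\mathrm{L}^q(\Omega)}$ used in \cite{loisel2020}; the new boundary block requires a finite-element trace-type estimate of the shape $\|\bar{M}h\|_2\lesssim h_\Omega^{(d-1)/2}\|h\|_{\mathrm{L}^q(\Gamma)}$, combined with the standard count of boundary simplices. Putting these together with the Hessian lower bound and the elementary $\|\omega/p\|_2\lesssim h_\Omega^{d/2}$ gives
\begin{equation*}
    \|c\|^{*}_{x^{*}_{F}} \;\lesssim\; \mathrm{poly}\bigl(h_\Omega^{-1},R,1+\|g\|_{X^p(\Omega)},\|f\|_{\mathrm{L}^q(\Omega)}+\|h\|_{\mathrm{L}^q(\Gamma)}+1\bigr),
\end{equation*}
and taking logarithms yields the exponents $h_\Omega^{-1-7.5d}$ and $R^{5}$ in the stated bound. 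The accuracy assertion $J_p(u)\le\min J_p+\varepsilon$ is then a routine duality-gap consequence of path-following: centrality of the final iterate and feasibility imply $\langle c,x\rangle - \langle c,x^{*}\rangle\le\varepsilon$, which via the reformulation \eqref{eq:ReformulatedFiniteMinimization} is exactly the claim for $J_p$. I expect the sharpest-constants work to lie in the boundary trace estimate, because any looseness there inflates the logarithmic argument past the advertised exponent $h_\Omega^{-1-7.5d}$; the complication specific to the Neumann extension is that this step has no direct counterpart in \cite{loisel2020} and must be tuned so as to match the $R^{5}$ factor without degrading it.
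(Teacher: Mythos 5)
Your proposal follows essentially the same route as the paper: plug the $4m$-self-concordant barrier into Nesterov's path-following bound in the form involving $\lVert c\rVert^{*}_{x^{*}_{F}}$ and $\lVert F'(\hat{x})\rVert^{*}_{x^{*}_{F}}$, reuse the Hessian eigenvalue lower bound and starting-point feasibility from the scalar Dirichlet case, bound the new boundary block $\lVert \bar{M}h\rVert_2$ by repeating the interior mass-matrix argument one dimension lower on $\Gamma$ (with the boundary-simplex count), and convert $m\le |\Omega|\,d!\,h_\Omega^{-d}$ into the stated prefactor. The paper's proof is exactly this, differing only in bookkeeping of constants, so your plan is sound.
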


\begin{proof}
    \noindent In order to compute the final estimate, we need to find a bound for $\lVert c \rVert_2$.
    Let $f \in \mathrm{L}^q(\Omega)$ and $h \in \mathrm{L}^q(\Gamma)$ piece-wise linear such that the quadrature on $l$ points is exact. 
    We can obtain a bound on a similar way as the bound for $\lVert F'(\hat{x}) \rVert_2$ in the original paper.
    Start with
    \begin{equation*}
        \lVert c \rVert_2 = \left\lVert \begin{bmatrix} -Mf-\bar{M}h \\ \frac{\omega}{p} \end{bmatrix} \right\rVert_2 
        \leq \lVert Mf \rVert_2 + \lVert \bar{M}h \rVert_2 + \frac{1}{p}\lVert \omega \rVert_2.
    \end{equation*}
    
    \noindent In accordance with the original proof we can bound the first term by
    \begin{equation*}
        \lVert Mf \rVert_2 = \lVert [E^{(l)}]^\intercal W^{(l)} E^{(l)} f \rVert_2 \leq \lVert [E^{(l)}]^\intercal W^{(l)} \rVert_2 \lVert E^{(l)}f \rVert_2
    \end{equation*}
    with the same idea of using $\Vert [E^{(l)}]^\intercal W^{(l)} \Vert_2^2 \leq \omega^{(l)}_{\max}\rho ([E^{(l)}]^\intercal W^{(l)} E^{(l)})$ and then bounding the spectral radius $\rho$.
    This is even easier here, because
    \begin{equation*}
        w^\intercal E^\intercal W E w = \int_{\Omega} w^2 dx = \lVert w \rVert^2_{\mathrm{L}^2(\Omega)} \leq [K_{\Omega}']^2 h_{\Omega}^{2d} \lVert w \rVert_2^2
    \end{equation*}
    can be estimated without further inequalities.
    \noindent The remainder can now be bound correspondingly with the equivalence of $p$-norms in finite dimensions
    \begin{equation*}
        \begin{aligned}
            \lVert Ef \rVert_2 &\leq [\omega^{(l)}_{min}]^{-1/2} \left( \sum_{i=1}^{ml} \omega^{(l)}_i [E^{(l)}f]_i^2 \right)^{\frac{1}{2}}\\
            &\leq [\omega^{(l)}_{min}]^{-1/2} (ml)^{1/2} \left( \sum_{i=1}^{ml} \omega^{(l)}_i [E^{(l)}f]_i^q \right)^{\frac{1}{q}}\\
            &\leq (\frac{\omega_{min}}{C_Q})^{-1/2} (ml)^{1/2} \lVert f \rVert_{\mathrm{L}^q(\Omega)}
        \end{aligned}
    \end{equation*}
    where we used that the weights on the reference elements are fixed positive and thus there exists a constant $C_Q > 0$ that only depends on the quadrature $Q$ such that $C_Q^{-1} \omega_{\min} \leq \omega^{(l)}_{\min} \leq \omega_{\min}$.
    Combing the results, we get
    \begin{equation*}
        \begin{aligned}
            \lVert Mf \rVert_2 &\leq \left(K_{\Omega}' h_{\Omega}^d \right) \left(\omega_{min}^{-1/2} C_Q^{1/2} l^{1/2} m^{1/2} \lVert f \rVert_{\mathrm{L}^q(\Omega)} \right)\\
            &\leq K_Q K_{\Omega}'' \lVert f \rVert_{\mathrm{L}^q(\Omega)}.
        \end{aligned}
    \end{equation*}
    
    \noindent A bound for the middle term can be obtained in the same way, just in $(d-1)$ dimensions. Using
    \begin{equation*}
        \vert \Gamma \vert = \sum_{i=1}^{\bar{m}}\bar{\omega}_i \geq \bar{m}\bar{\omega}_{\min} \geq \bar{m}\frac{h_{\Omega}^{d-1}}{(d-1)!} \Leftrightarrow \bar{m} \leq \vert \Gamma \vert (d-1)! h_{\Omega}^{d-1}
    \end{equation*}
    this reads
    \begin{equation*}
        \begin{aligned}
            \lVert \bar{M}h \rVert_2 &\leq \left(K_{\Omega}' h_{\Omega}^{d-1} \right) \left(\bar{\omega}_{min}^{-1/2} (C_{Q}\bar{m}\bar{l})^{1/2} \lVert h \rVert_{\mathrm{L}^q(\Gamma)} \right)\\
            &\leq K_{Q} K_{\Omega}'' \lVert h \rVert_{\mathrm{L}^q(\Gamma)}.
        \end{aligned}
    \end{equation*}

    \noindent Finally, using $\lVert \omega \rVert_2 = \left( \sum_{i=1}^m \omega_i^2 \right)^{1/2} \leq \sqrt{m} \omega_{\max} \leq \sqrt{m}\frac{(\rho h)^d}{d!}$ and $p^{-1} \leq 1$ we are able to obtain a bound in the form
    \begin{equation*}
        \lVert c \rVert_2 \leq C^{\ast}_{\Omega} C^{\ast}_{Q}(\lVert f \rVert_{\mathrm{L}^q(\Omega)} + \lVert h \rVert_{\mathrm{L}^q(\Gamma)} + 1).
    \end{equation*}

    \noindent Together with the previous results from the original proof we can now determine the relevant bounds as: 
    \begin{equation*}
        \begin{aligned}
            \lVert c \rVert^*_{x_F^*} &\leq \lambda_{\min}^{-1} \lVert c \rVert_2 \leq \left( c_{\Omega}' R^{-2} h_{\Omega}^{2d} \right)^{-1} \left( C^{\ast}_{\Omega} C^{\ast}_{Q}(\lVert f \rVert_{\mathrm{L}^q(\Omega)} + \lVert h \rVert_{\mathrm{L}^q(\Gamma)} + 1) \right)\\
            &= [c_{\Omega}']^{-1} C^{\ast}_{\Omega} C^{\ast}_{Q} R^2 h_{\Omega}^{-2d} (\lVert f \rVert_{\mathrm{L}^q(\Omega)} + \lVert h \rVert_{\mathrm{L}^q(\Gamma)} + 1)\\
            \lVert F'(\hat{x}) \rVert^*_{x_F^*} &\leq \lambda_{\min}^{-1} \lVert F'(\hat{x}) \rVert_2 \leq \left( c_{\Omega}' R^{-2} h_{\Omega}^{2d} \right)^{-1} \left( C^*_{\Omega} h_{\Omega}^{-1-1.5d} R (1+\lVert g \rVert_{X^p(\Omega)})\right)\\ 
            &= [c_{\Omega}']^{-1} C_{\Omega}^* R^3 h_{\Omega}^{-1-3.5d} (1 + \lVert g \rVert_{X^p(\Omega)})
        \end{aligned}
    \end{equation*}
    Plugging these into the actual bound by Nesterov \cite[Sec. 4.2.5]{nesterov2004}
    \begin{equation*}
        N \leq 7.2 \sqrt{\nu}\, [2\, \ln(\nu) + \ln(\lVert F'(\hat{x}) \rVert^*_{x_F^*}) + \ln(\lVert c \rVert^*_{x_F^*}) + \ln(1/\varepsilon)],
    \end{equation*}
    we obtain the bound in terms of number of grid points as
    \begin{equation*}
        \begin{aligned}
            N &\leq 7.2 \sqrt{4m}[2\log(4m) + \log(1/\varepsilon)\\
                &\quad + \log([c_{\Omega}']^{-1} C_{\Omega}^{\ast} R^3 h_{\Omega}^{-1-3.5d} (1 + \lVert g \rVert_{X^p(\Omega)}))\\
                &\quad + \log( [c_{\Omega}']^{-1} C^{\ast}_{\Omega} C^{\ast}_{Q} R^2 h_{\Omega}^{-2d} (\lVert f \rVert_{\mathrm{L}^q(\Omega)} + \lVert h \rVert_{\mathrm{L}^q(\Gamma)} + 1))]\\
            &\leq 14.4 \sqrt{|\Omega|d! h_{\Omega}^{-d}} [\log([c_{\Omega}']^{-2} [C_{\Omega}^{\ast}]^2 C_{Q}^{\ast} 16 |\Omega|^2 (d!)^2)]\\
                &\quad + \log(\varepsilon^{-1} h_{\Omega}^{-1-7.5d} R^5 (1 + \lVert g \rVert_{X^p(\Omega)})(\lVert f \rVert_{\mathrm{L}^q(\Omega)} + \lVert h \rVert_{\mathrm{L}^q(\Gamma)} + 1) )]\\
            &= 14.4 \sqrt{|\Omega|d! h_{\Omega}^{-d}} [K^*(\Omega, \rho_{\Omega}, Q)\\
                &\quad + \log(\varepsilon^{-1} h_{\Omega}^{-1-7.5d} R^5 (1 + \lVert g \rVert_{X^p(\Omega)})(\lVert f \rVert_{\mathrm{L}^q(\Omega)} + \lVert h \rVert_{\mathrm{L}^q(\Gamma)} + 1) )].
        \end{aligned}
    \end{equation*}
\end{proof}

\begin{remark}
    To find the global minimum, $R$ has to be chosen sufficiently large, so that the solution is contained in the search set $\mathcal{Q}$. For $2 \leq p < \infty$ such an upper bound exists and is given by
    \begin{equation*}
        R = 2 + 4 \lVert g \rVert_{X^p(\Omega)}^p + 8 (p-1) \left[ L^q \lVert f \rVert_{\mathrm{L}^q(\Omega)}^q + C_T^q (L^q + 1) \lVert h \rVert_{\mathrm{L}^q(\Gamma)}^q\right]
    \end{equation*}
    where $C_T$ denotes the Sobolev trace constant \cite{evans2015}, which only depends on $p$ and $\Omega$. While there even exists an uniform upper bound for it\cite{bonder2003}, usually neither of them is computable. Therefore, in the actual implementation we choose a heuristic approach. Start by dropping the term resulting from the Neumann boundary condition and if the values during the iteration come close to the bound, restart with an increased version.
\end{remark}
\begin{proof}
    We follow the original proof for the pure Dirichlet case, showing an upper bound for a minimizing sequence $u_k$ using Hölder's inequality, the modified Friedrichs inequality for $\lVert \cdot \rVert_{X^p(\Omega)}$ and the Sobolev trace theorem for the new term.\\
    
    \noindent Start by assuming $\lVert u \rVert_{X^p(\Omega)} \geq \lVert g \rVert_{X^p(\Omega)}$, since otherwise the bound is trivial, and compute
    \begin{equation*}
        \begin{aligned}
            J(u) &= \frac{1}{p}\lVert u+g \rVert_{X^p(\Omega)}^p - \int_\Omega fu \;\mathrm{d}x - \int_\Gamma hu \;\mathrm{d}\Gamma\\ 
            &\geq \frac{1}{p} (\lVert u \rVert_{X^p(\Omega)} - \lVert g \rVert_{X^p(\Omega)})^p - \lVert f \rVert_{\mathrm{L}^q(\Omega)}\lVert u \rVert_{\mathrm{L}^p(\Omega)} - \lVert h \rVert_{\mathrm{L}^q(\Gamma)}\lVert u \rVert_{\mathrm{L}^p(\Gamma)}\\
            &\geq \frac{1}{p} \lVert u \rVert_{X^p(\Omega)}^p - \frac{1}{p} \lVert g \rVert_{X^p(\Omega)}^p - \lVert f \rVert_{\mathrm{L}^q(\Omega)} L p^{-\frac{1}{p}}\lVert u \rVert_{X^p(\Omega)}^p\\
                &\quad- \lVert h \rVert_{\mathrm{L}^q(\Gamma)} C_T (L p^{-\frac{1}{p}}+1) \lVert u \rVert_{X^p(\Omega)}^p
        \end{aligned}
    \end{equation*}
    \noindent Now we can modify the application of Young's inequality for two forcing terms to $a_1 = 4^{\frac{1}{p}} L p^{-\frac{1}{p}} \lVert f \rVert_{\mathrm{L}^q(\Omega)} $, $a_2 = 4^{\frac{1}{p}} C_T (L p^{-\frac{1}{p}}+1) \lVert h \rVert_{\mathrm{L}^q(\Gamma)} $ and $b_{1,2} = 4^{-\frac{1}{p}} \lVert u \rVert_{X^p(\Omega)}$ and thus get
    
    \begin{equation*}
        \begin{aligned}
            J(u) - J(0) &\geq \frac{1}{2p}\lVert u \rVert_{X^p(\Omega)}^p - \frac{2}{p} \lVert g \rVert_{X^p(\Omega)}^p -\frac{1}{q} \underbrace{4^{\frac{q}{p}}}_{\underset{\text{for}\, p\geq2}{\leq 4}} \underbrace{p^{-\frac{q}{p}}}_{\leq 1} L^q \lVert f \rVert_{\mathrm{L}^q(\Omega)}^q\\
                &\quad- \frac{1}{q} 4^{\frac{q}{p}} C_T^q \underbrace{(L p^{-\frac{1}{p}}+1)}_{\leq (L^q p^{-\frac{q}{p}}+1)} \lVert h \rVert_{\mathrm{L}^q(\Gamma)}^q.
        \end{aligned}
    \end{equation*}
    
    \noindent Therefore, if
    \begin{equation*}
        \lVert u \rVert_{X^p(\Omega)}^p > 4 \lVert g \rVert_{X^p(\Omega)}^p + 8 (p-1) \left[ L^q \lVert f \rVert_{\mathrm{L}^q(\Omega)}^q + C_T^q (L^q + 1) \lVert h \rVert_{\mathrm{L}^q(\Gamma)}^q\right] =: \tilde{R},
    \end{equation*}
    then $J(u) - J(0) \geq 0$. 
    By contradiction any minimizing sequence $u_k$ must fulfill $\lVert u_k \rVert_{X^p(\Omega)}^p \leq \tilde{R}$ for some $k$ large enough.
    Thus, it is in the set $\mathcal{Q}$ and the final bound is obtained by $R := 2(\tilde{R}+1)$ to ensure the construction condition for the initial value.
\end{proof}
\section{Numerical results}
\label{sec:NumericalResults}

In this section, we present results for numerical experiments with the scheme presented in section \ref{sec:HighOrderDescent}.
The data was calculated with an implementation in julia based on the finite element library MinFEM \cite{minfem2020} and visualized with Paraview.
We choose julia since it allows straightforward computations using matrix or vector based operations, easy adaptation of the code and provides great accessibility to accuracy parameters.
Therefore, it is highly suitable for the analysis of algorithms and the experiments performed here.\\

% Validation by manufactured solutions
\begin{figure}[!htp]
    \centering
    \begin{subfigure}[b]{0.3\textwidth}
        \centering
        \includegraphics[width=\textwidth]{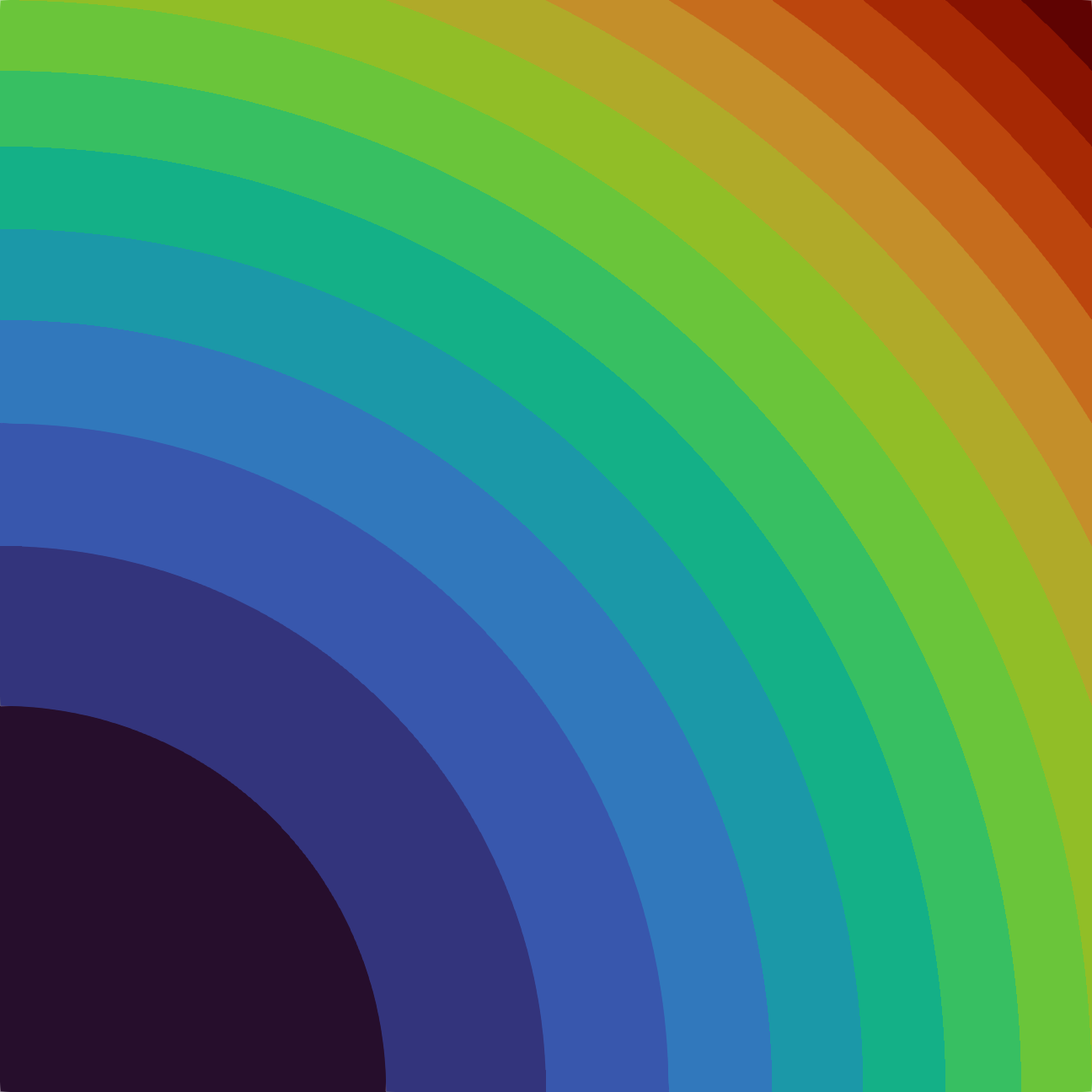}
    \end{subfigure}
    \begin{subfigure}[b]{0.3\textwidth}
        \centering
        \includegraphics[width=\textwidth]{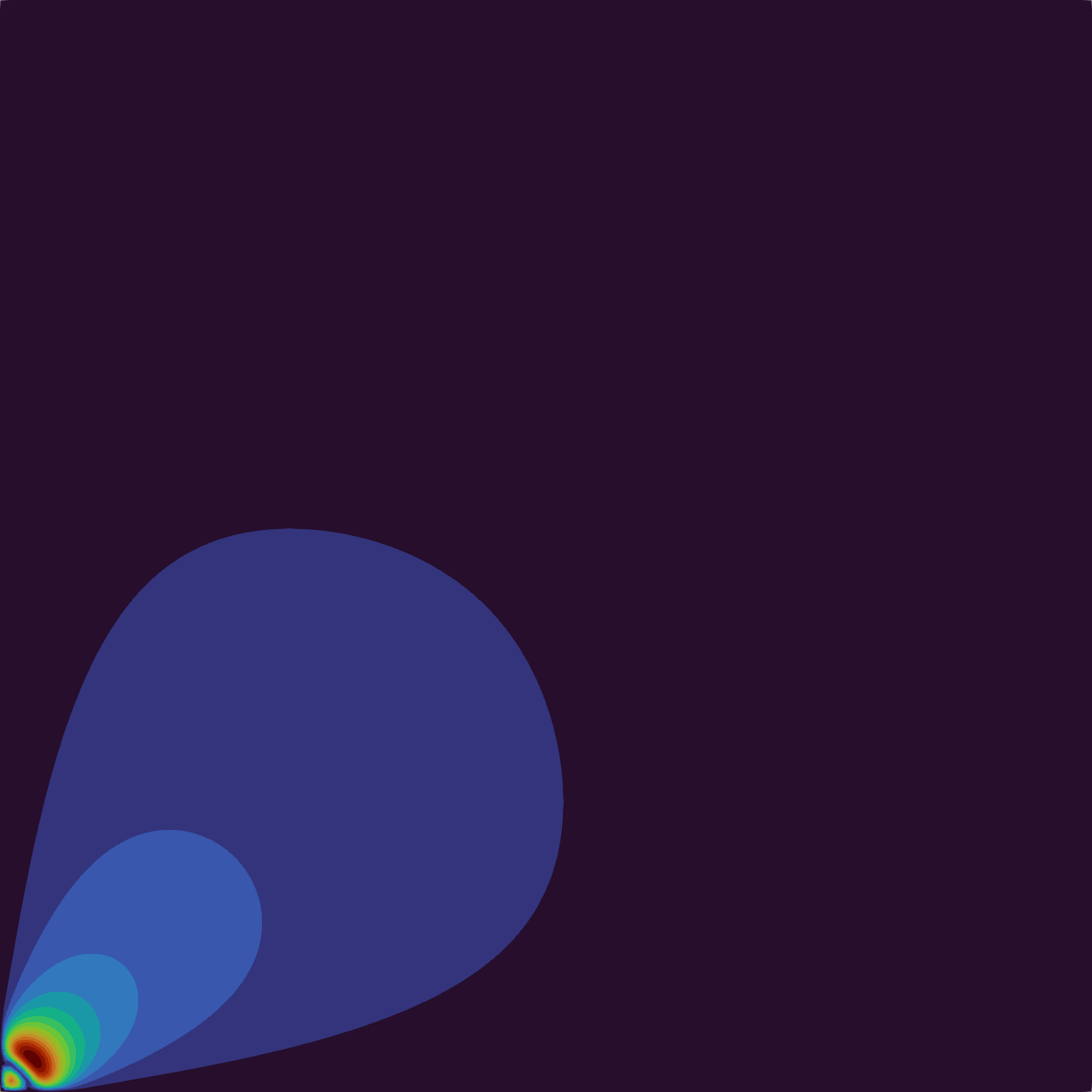}
    \end{subfigure}
    \begin{subfigure}[b]{0.3\textwidth}
        \centering
        \includegraphics[width=\textwidth]{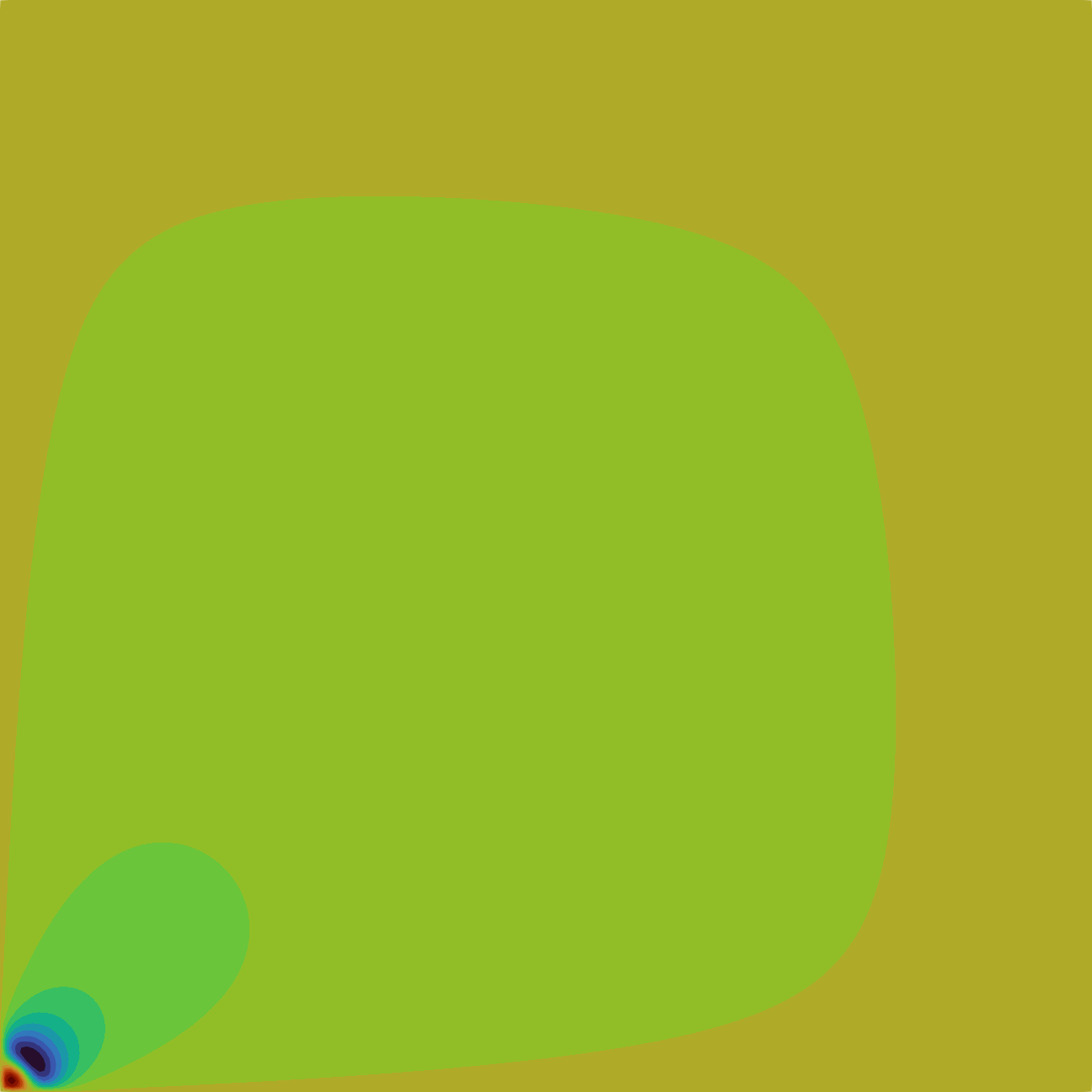}
    \end{subfigure}
    \begin{subfigure}[b]{0.3\textwidth}
        \centering
        \includegraphics[width=\textwidth]{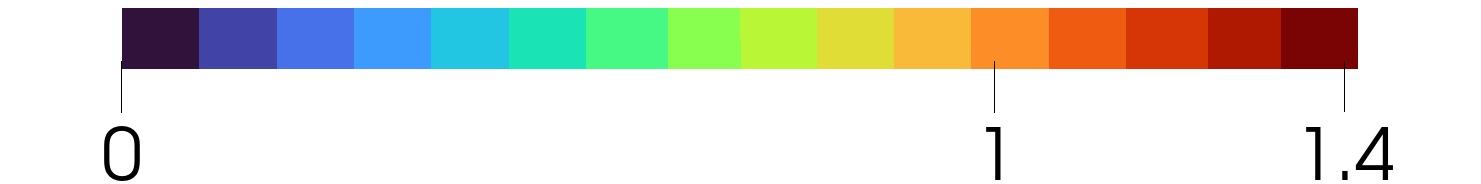}
        \caption{$\lVert v \rVert_2$}
        \label{fig:ValidationSolMagnitude}
    \end{subfigure}
    \begin{subfigure}[b]{0.3\textwidth}
        \centering
        \includegraphics[width=\textwidth]{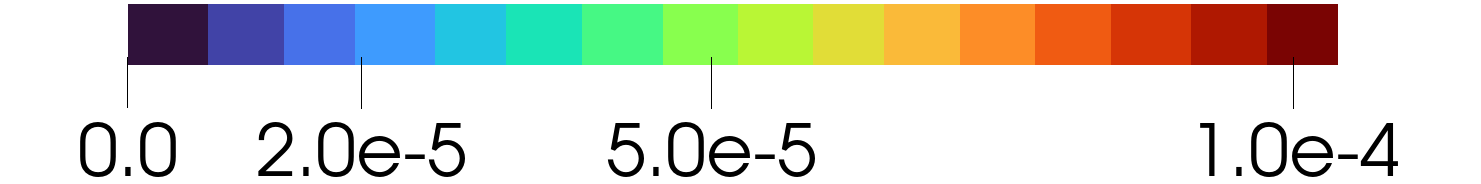}
        \caption{$\lVert v-v^* \rVert_2$}
        \label{fig:ValidationErrorMagnitude}
    \end{subfigure}
    \begin{subfigure}[b]{0.3\textwidth}
        \centering
        \includegraphics[width=\textwidth]{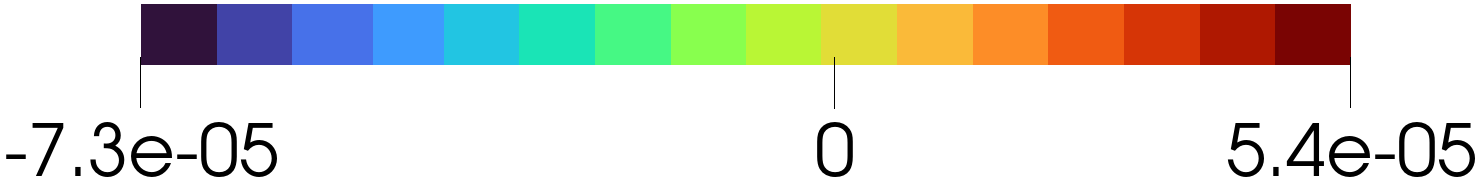}
        \caption{$v_r - v_r^*$}
        \label{fig:ValidationErrorX}
    \end{subfigure}
    \captionsetup{format=hang}
    \caption{Solution and error for validation by method of manufactured solutions for $v^* = \frac{1}{2}\lVert x \rVert_2 \cdot [1,1]^\intercal$ on $\Omega=[0,1]^2$.}
    \label{fig:Validation}
\end{figure}

\noindent We start by validating the algorithm and the implementation. 
For that purpose, we use the method of manufactured solutions \cite{salari2000} to approximate the analytical solution $v^* = \frac{1}{2}\lVert x \rVert_2 \cdot [1,1]^\intercal$ given for the problem
\begin{equation*}
    \left.\begin{array}{ll}
    -\Delta_p v  = -p\, 2^{\frac{p-2}{2}} \lVert x \rVert_2^{p-2} \cdot \begin{bmatrix}1\\ 1\end{bmatrix} & \text{in } \Omega\\
    v = \frac{1}{2}\lVert x \rVert_2^2 \cdot \begin{bmatrix}1\\ 1\end{bmatrix} & \text{on } \partial\Omega
    \end{array}\right\}.
\end{equation*}

\noindent We only test the vector-valued setting, since it contains the scalar case, which has not been validated so far, per component.
Figure \ref{fig:Validation} shows the obtained solution and the error on the unit square discretized by a regular mesh with $40000$ nodes.
The error in the two components is identical and in the order of the intended accuracy $\varepsilon = 10^{-6}$.
The largest errors naturally occurs in the bottom left corner, where a function value close to $0$ has to be approximated.
Thus we consider the results obtained by our implementation as valid.\\

% Example problem description
\begin{figure}[!htp]
    \centering
    \includegraphics[width=0.4\textwidth]{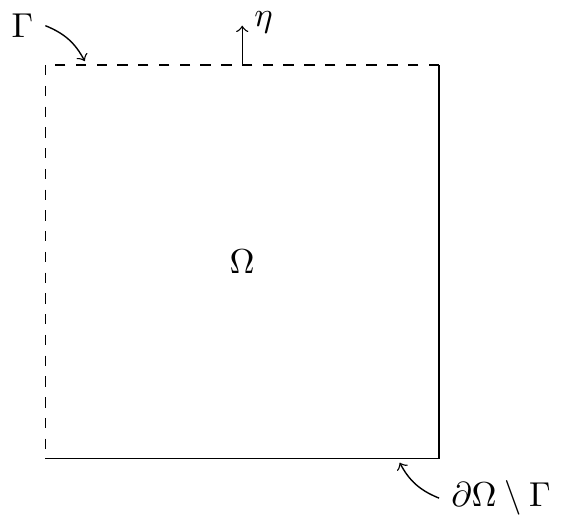}
    \caption{Sketch of domain for exemplary problem.}
    \label{fig:DomainSketch}
\end{figure}

\noindent From now on, we focus on the additional Neumann boundary conditions.
Therefore, we consider the domain $\Omega$ as the unit square, where left and upper boundary are free for deformation and denoted by $\Gamma$.
A sketch of this domain can be found in figure \ref{fig:DomainSketch}.
On the free boundary, we will work with combinations of the function $\hat{h}(x) = \sin(2\pi x_1) - \sin(2\pi x_2)$.
This is a sine wave cycle on each part of the boundary, where the one on the upper boundary is inverted such that the two positive parts are next to the upper left corner.
As we will see later, this construction leads towards a distance function on the boundary of the limit solution instead of multiple hats.\\

% Effort vector-valued
\begin{table}[!hbp]
    \centering
    \begin{tabular}{ c || c | c | c || c | c | c || c | c | c ||}
         $h=\phantom{22}$ & \multicolumn{3}{c||}{$\hat{h}(x)$} & \multicolumn{3}{c||}{$\hat{h}(x) \cdot \eta$} & \multicolumn{3}{c||}{$\hat{h}(x) \cdot [1,1]^\intercal$}\\
         $n=\phantom{22}$ & \phantom{~}2500 & 10000 & 40000 & \phantom{~}2500 & 10000 & 40000 & \phantom{~}2500 & 10000 & 40000\\
        \hline
        $p=\phantom{2}2$ & 95 & 102 & 116 & 94 & 101 & 115 & 95 & 104 & 116\\
        $p=\phantom{2}3$ & 95 & 104 & 114 & 93 & 101 & 114 & 96 & 107 & 115\\
        $p=\phantom{2}5$ & 92 & 103 & 113 & 92 & 102 & 107 & 98 & 129 & 116\\
        $p=\phantom{2}8$ & 118 & 213 & 198 & 86 & 99 & 107 & 186 & 204 & 213\\
        $p=15$ & 191 & 253 & 296 & 111 & 126 & 148 & 228 & 278 & 326\\
        $p=25$ & 233 & 308 & 464 & 152 & 204 & 181 & 280 & 361 & 481\\
    \end{tabular}
    \captionsetup{format=hang}
    \caption{Required Newton iterations for solving problems for different boundary source terms with $\hat{h}(x) = \sin(2\pi x_1) - \sin(2\pi x_2)$, number of grid points and PDE parameters $p$.}
    \label{tab:FiniteEffort}
\end{table}

\noindent For this setting, we can observe the number of required Newton iterations in the path-following with adaptive step-size control for various PDE parameter $p$ and refinements the grid.
Table \ref{tab:FiniteEffort} shows these values for the scalar setting and two different prolongations of $\hat{h}(x)$ to a vector-valued setting.
Note that the vector-valued problems can be a significantly different problem.
For example applying $\hat{h}(x)$ to the outer normal vector results in a problem, where per component only one of the free edges features a sine wave and the other one is homogeneous.
This is component wise a simpler problem than the regular scalar one, which is despite the connection of the components visible in the required iterations.
The first observation is that much higher orders $p$ are possible when the boundary features a free part.
For the pure Dirichlet setting the numerical maximal value was around $p=5$, here the source term reduces the stiffness of the problem such that even $p=25$ is possible.
In general, we see for all settings the expected behaviour of increasing iterations for increasing $p$ and $n$ with some exceptions due to the adaptive stepping.
Further, the overall iterations stay comparably small to the number of grid points and thus significantly better than the theoretical estimate.
In the scalar Dirichlet case this behaviour for the adaptive stepping was already known, but it was unclear if it transfers to the vector-valued setting, especially since the analytical problem is inherently more difficult due to nature of the Frobenius norm in the operator.
For the intended application to shape optimization this is a crucial observation, since the computation many deformation fields is required and thus determines the overall runtime.\\

% Deformation
\begin{figure}[!htp]
    \centering
    \begin{subfigure}[b]{0.45\textwidth}
        \centering
        \includegraphics[width=\textwidth]{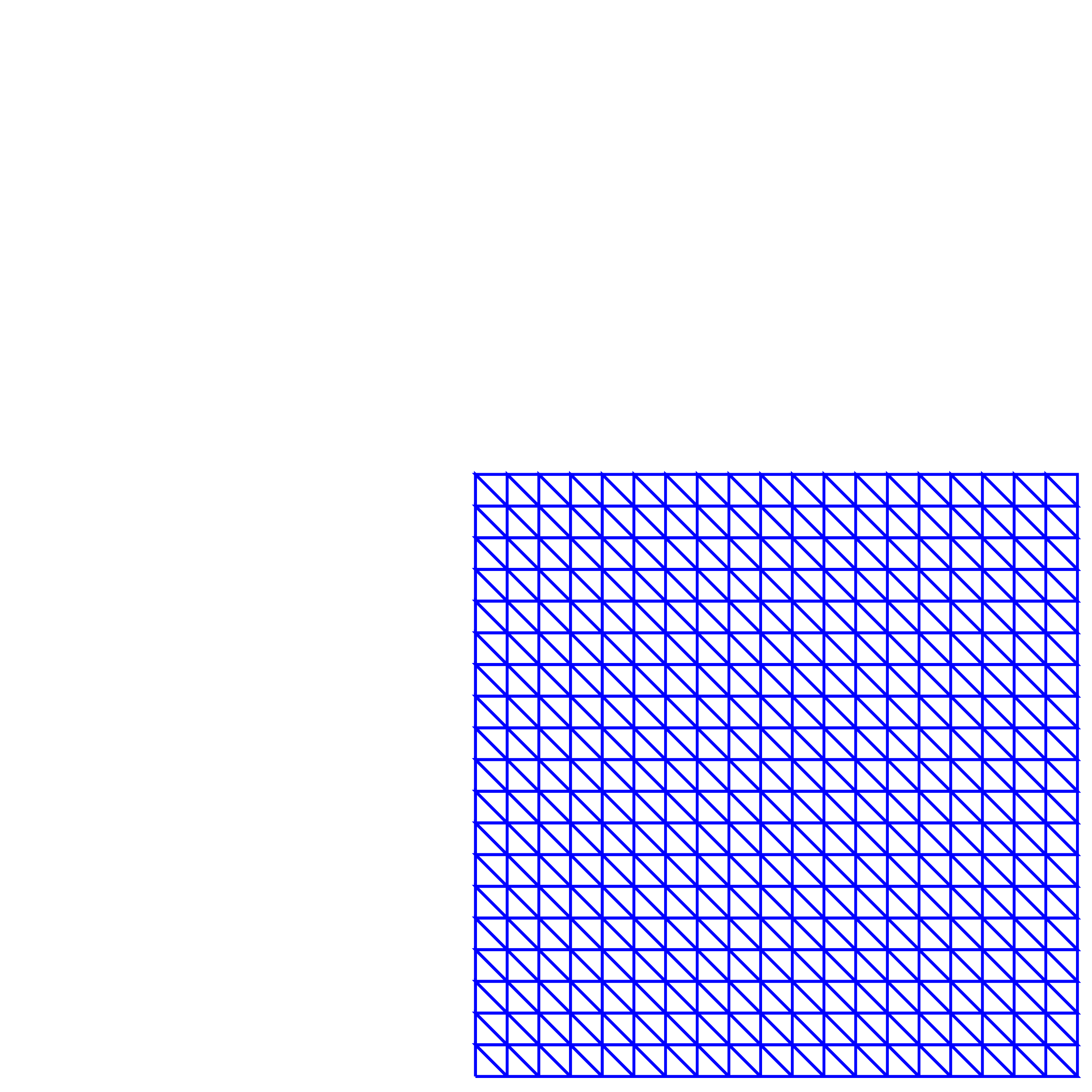}
        \caption{Initial mesh}
    \end{subfigure}
    \begin{subfigure}[b]{0.45\textwidth}
        \centering
        \includegraphics[width=\textwidth]{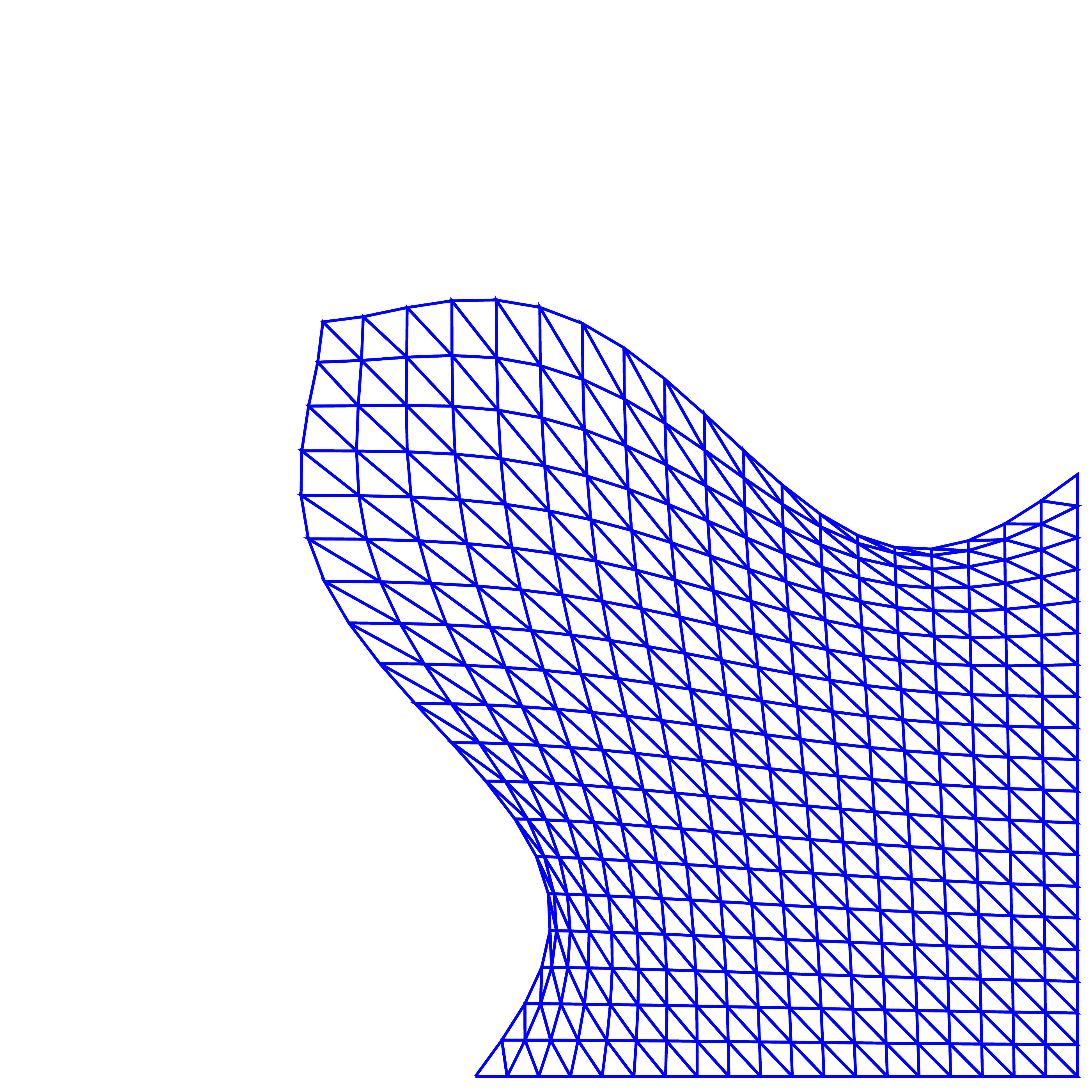}
        \caption{$p=2$}
    \end{subfigure}
    \begin{subfigure}[b]{0.45\textwidth}
        \centering
        \includegraphics[width=\textwidth]{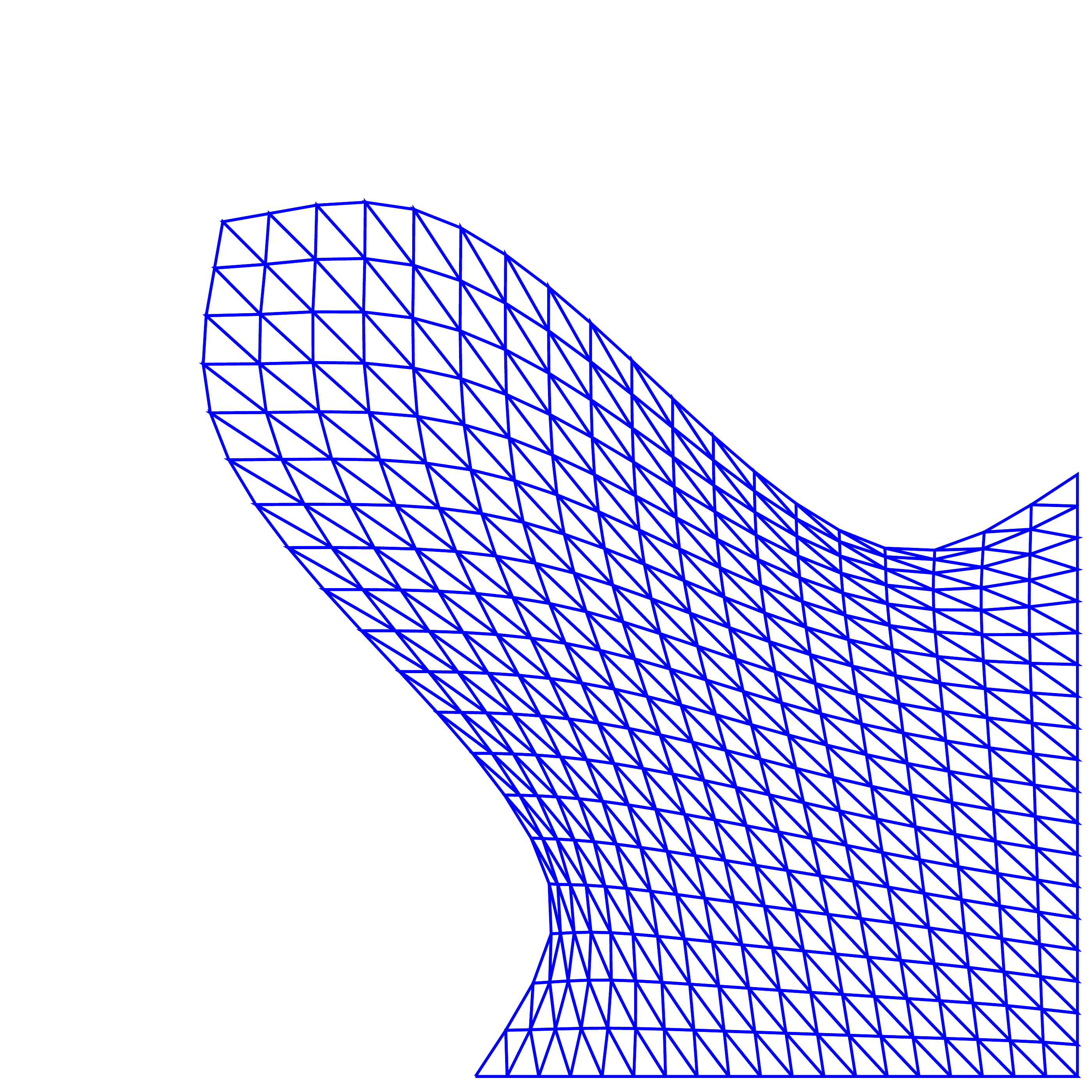}
        \caption{$p=5$}
    \end{subfigure}
    \begin{subfigure}[b]{0.45\textwidth}
        \centering
        \includegraphics[width=\textwidth]{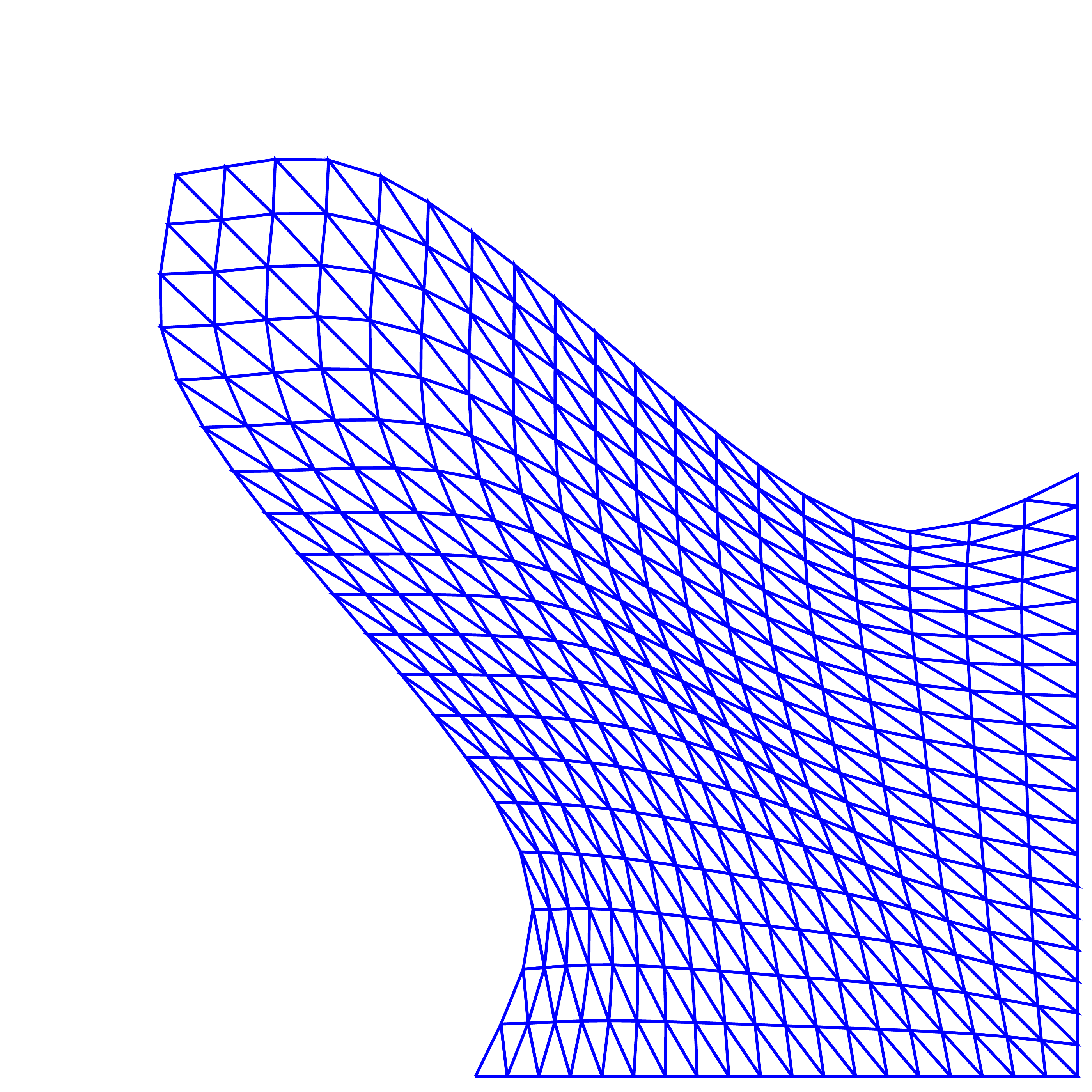}
        \caption{$p=15$}
    \end{subfigure}
    \captionsetup{format=hang}
    \caption{Deformation of a square domain with $h(x) = (\sin(2\pi x_1) - \sin(2\pi x_2)) \cdot \eta$ prescribed on the left and upper boundary for increasing $p$. For comparison the solution for the smaller $p$ is given in blue and the higher order one in red.}
    \label{fig:FiniteDeformations}
\end{figure}

\noindent Now we will use results obtained for $\hat{h}(x) \cdot \eta$ to calculate the deformation of the square by perturbation of identity (\ref{eq:PerturbationOfIdentity}).
A selected sequence of transformed domains are shown in figure \ref{fig:FiniteDeformations} with a reduced number of grid points for improved visibility.
For comparison we take the domain for $p=2$.
This features strong bends near the two fixed endpoints of the free boundary, where the mesh is highly deteriorated.
Increasing the order to $p=5$, the magnitude of the deformation increases significantly and the bends reduce, however the mesh quality is still poor.
When changing to $p=15$, the magnitude changes only slightly as well as the bend.
However, the mesh at the bends is not deteriorated anymore and the elements in the interior are deformed more uniformly.

\section{Descent in \texorpdfstring{$W^{1,\infty}$}{W 1,inf}}
\label{sec:InfiniteDescent}

% Intro Inf solutions
\noindent In this section, we consider directly solving the steepest descent problem (\ref{eq:LipschitzSteepestDescent}), commonly associated with the variational problem for the $\infty$-Laplacian and discuss the challenges arising.
The first important property of the $\infty$-Laplacian is that the solutions are in general non-unique. 
However, by the approach of regularization with the $p$-Laplacian, we want the limit of those so-called $p$-Extensions, which is known as the \textit{absolutely minimizing Lipschitz extension (AMLE)} due to early work by Aronsson \cite{aronsson2004}.
Here, the term ``absolutely minimizing'' denotes functions $v \in C(\Omega)$ with Lipschitz constant $L_v(V) = L_v(\partial V)$ for all $V \Subset \Omega$.\\

% 1D Plots
\begin{figure}[!htpb]
    \centering
    \begin{subfigure}[b]{0.45\textwidth}
        \centering
        \includegraphics[width=\textwidth]{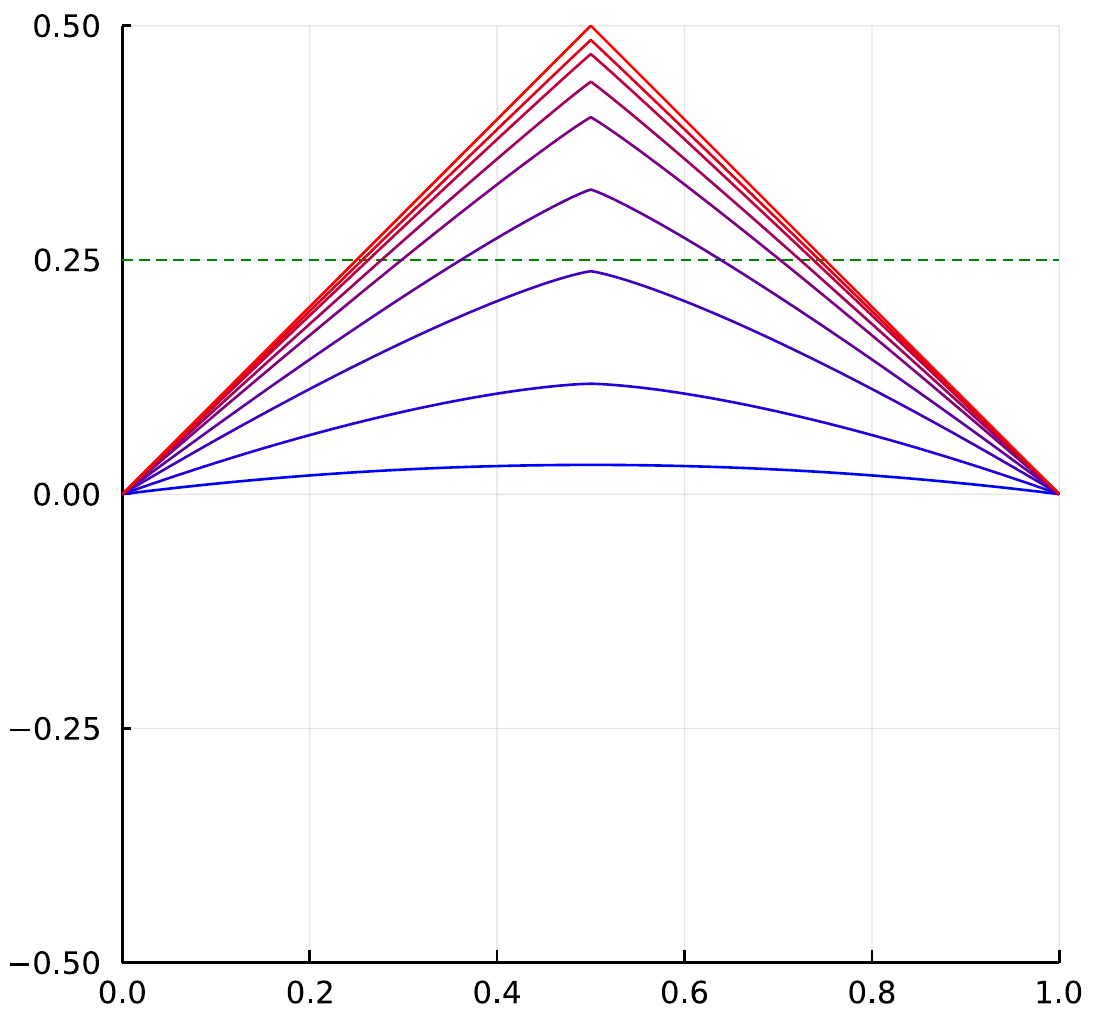}
    \end{subfigure}
    \begin{subfigure}[b]{0.45\textwidth}
        \centering
        \includegraphics[width=\textwidth]{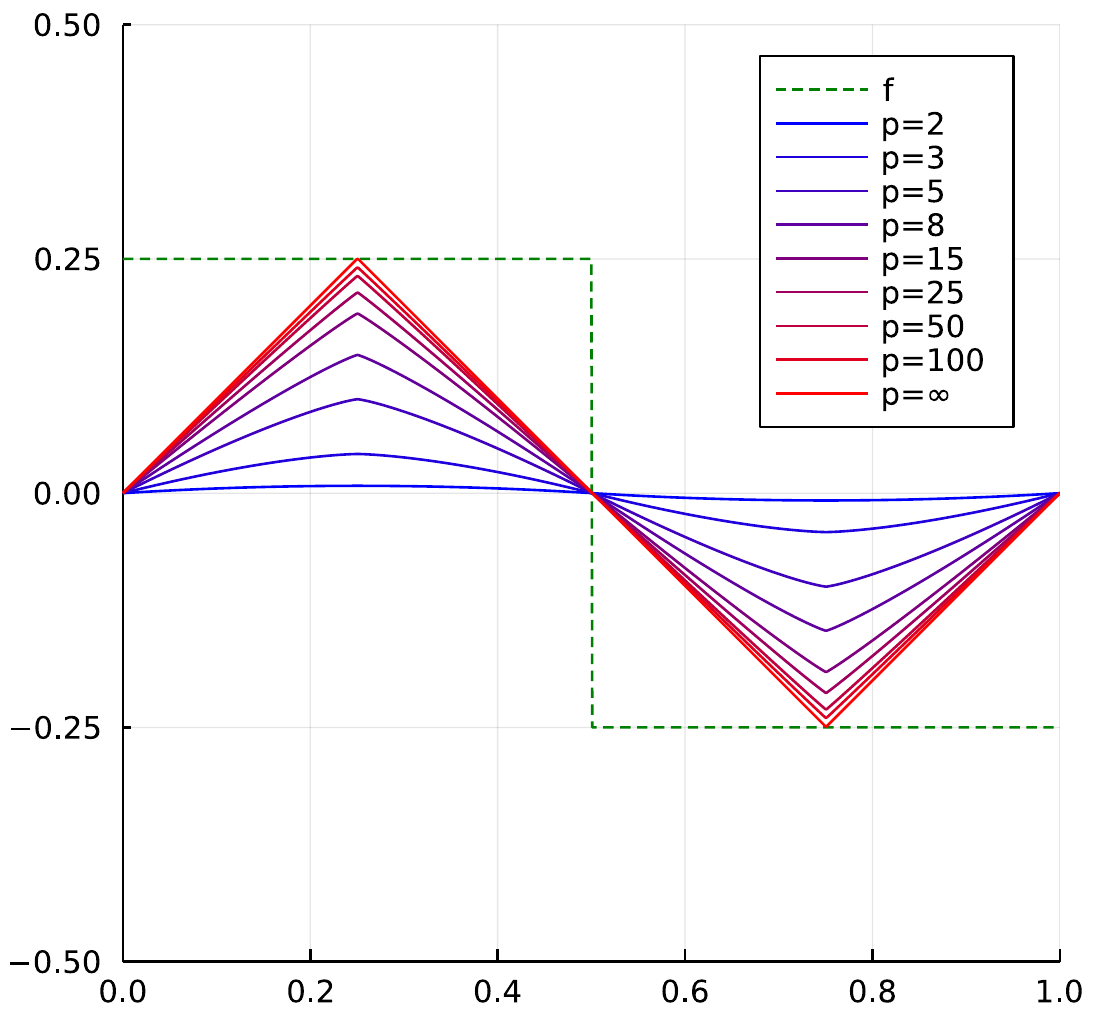}
    \end{subfigure}
    \begin{subfigure}[b]{0.45\textwidth}
        \centering
        \includegraphics[width=\textwidth]{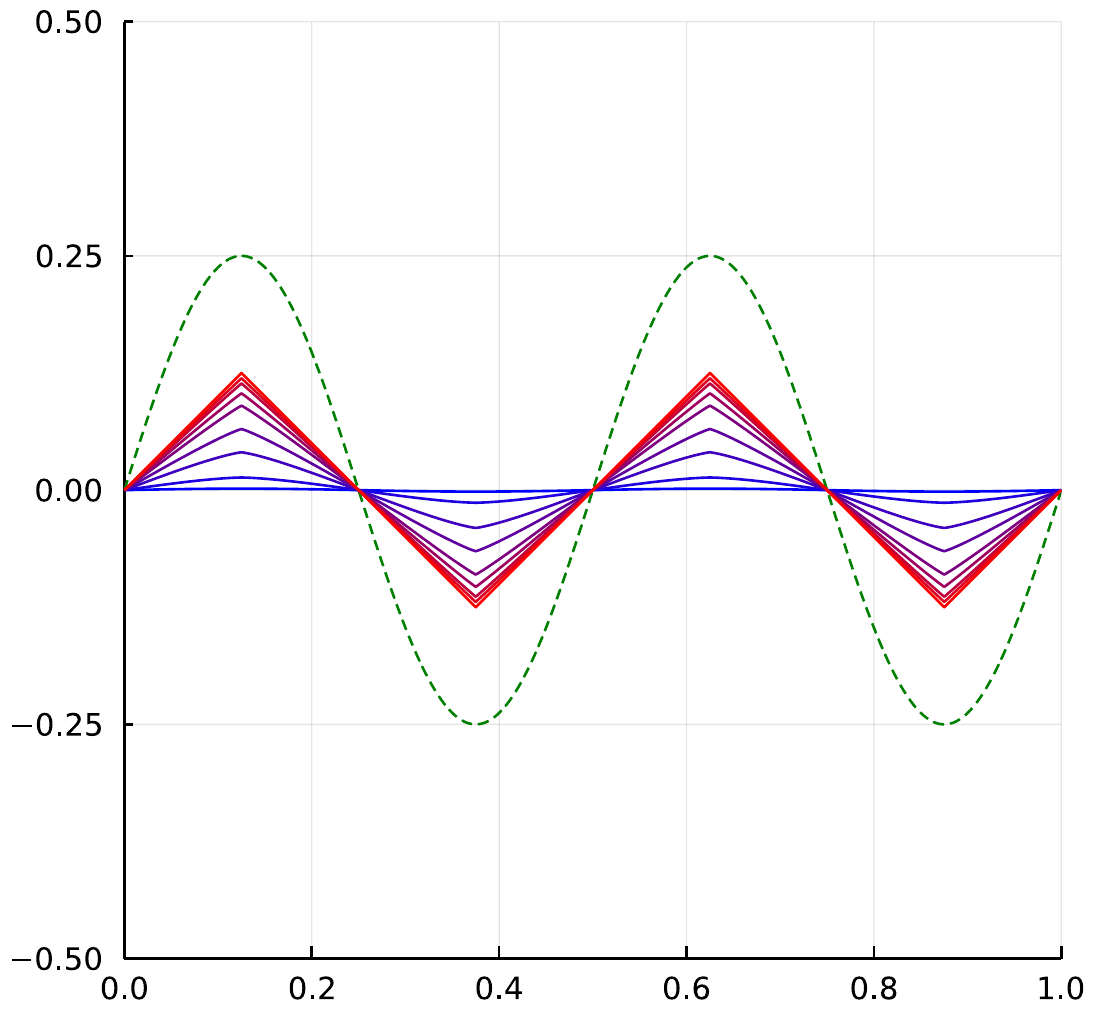}
    \end{subfigure}
    \begin{subfigure}[b]{0.45\textwidth}
        \centering
        \includegraphics[width=\textwidth]{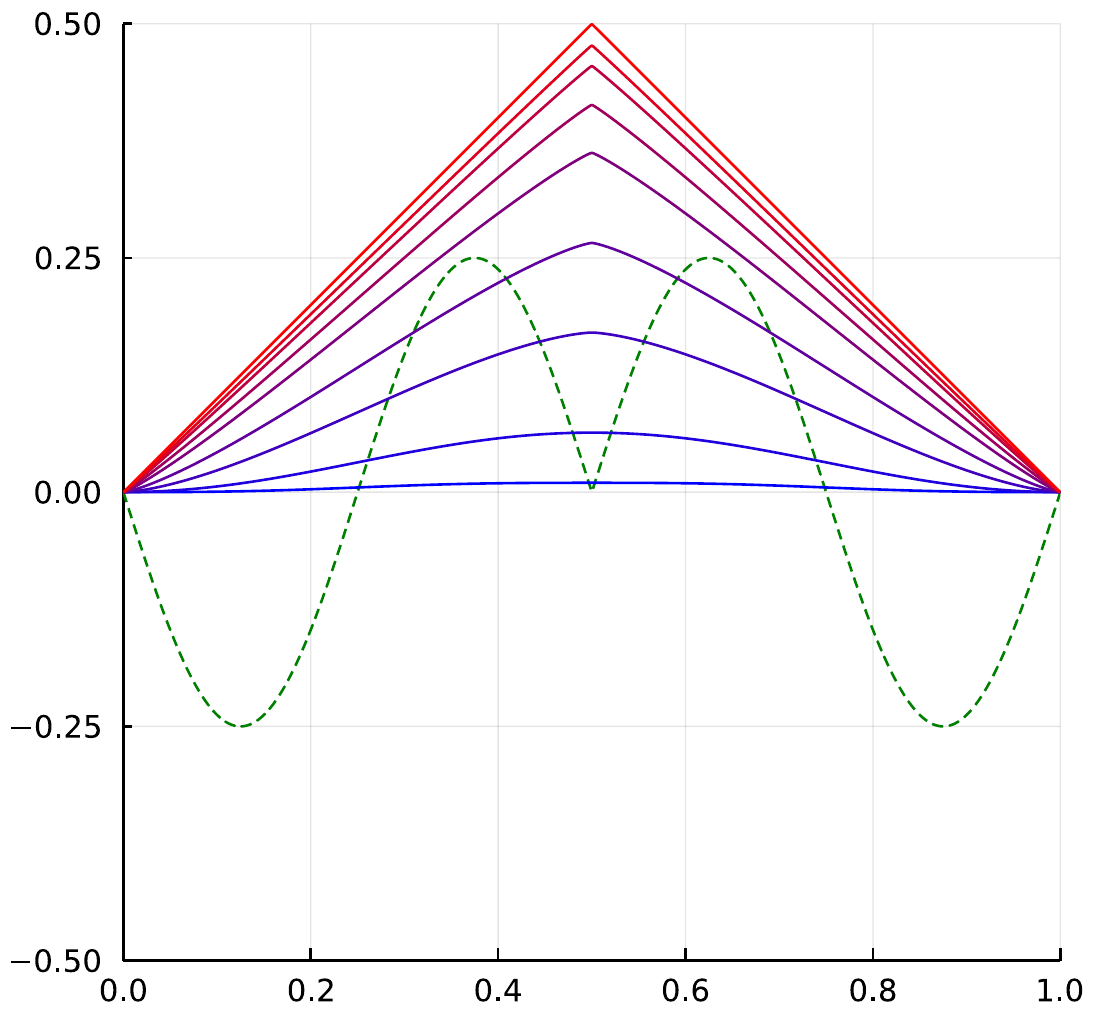}
    \end{subfigure}
    \captionsetup{format=hang}
    \caption{Convergence of numerical solutions for the $p$-Laplacian with increasing order and the analytical limit $p=\infty$ in $[0,1]$ for $g=0$ and different source terms $f$.}
    \label{fig:1DPlot}
\end{figure}

\noindent While the minimization formulation for the problem is common, there is no variational formulation with test functions under the integral \cite{lindqvist2019}.
Further, even only for zero forcing a reformulation to an Euler-Lagrange equation is possible.
With this approach it was shown that unique solutions in the above sense exist for boundary extension problems \cite{jensen1993}.
This remains true for homogeneous Dirichlet problems with non-negative source terms \cite{ishii2005}.
Here, the unique limit solution can be split into two regions.
In the support of the source term, the solution is given by the distance to the boundary $\mathrm{dist}(x,\partial\Omega)$.
The other region is then given by the unique solution of the $\infty$-Laplacian without forcing and the distance as Dirichlet boundary condition.\\

\noindent In one dimension, the situation is simplified and analytical solutions can be computed even for source terms with changing sign. 
Figure \ref{fig:1DPlot} shows such solutions and corresponding solutions of $p$-harmonic relaxations for different source terms.
We can observe that the limit solution always has slope 1.
Its magnitude does not depend on the magnitude of the source term, but only on the length of the interval between two sign changes.
However, the $p$-harmonic solutions depend on it, meaning that the approximation quality depends on the magnitude of the source term as well. 
Another interesting observation can be made in the lower right plot.
The solutions are able to eliminate certain areas with different signs.
While the the limit solution is identical to the one with constant source term, the $p$-harmonic approximations are different.
Especially one can still an impact of the sign change close to the boundary and the kink at tip occurs later.
Further, this one dimensional source term is similar to a normalized version of the Neumann boundary condition or boundary source term used in figure \ref{fig:FiniteDeformations}.
Thus, we would expect a solution with slope 1 everywhere on the boundary for the limit deformation.\\

% 2D Deformations
\begin{figure}[!htbp]
    \centering
    \begin{subfigure}[b]{0.3\textwidth}
        \centering
        \includegraphics[width=\textwidth]{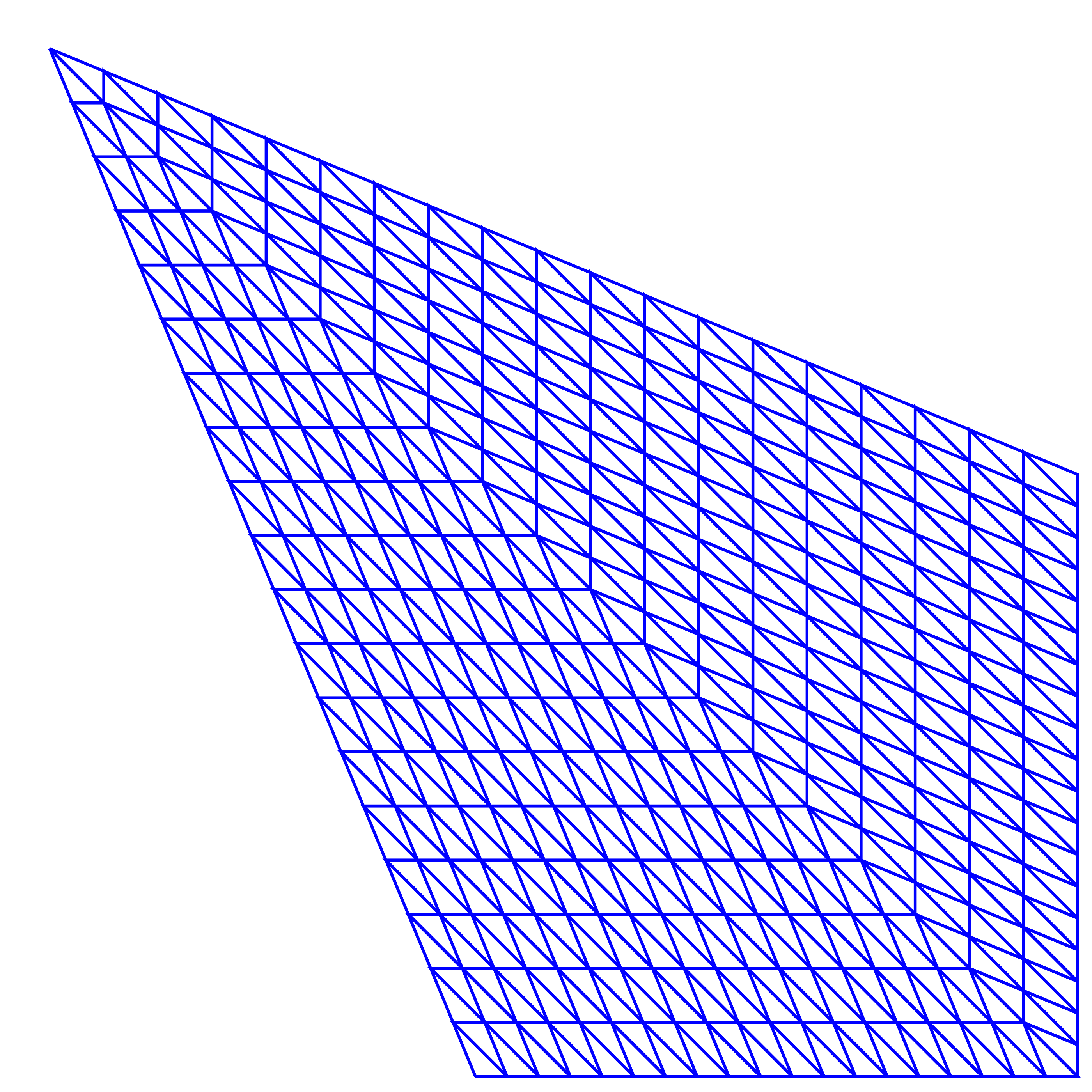}
    \end{subfigure}
    \begin{subfigure}[b]{0.3\textwidth}
        \centering
        \includegraphics[width=\textwidth]{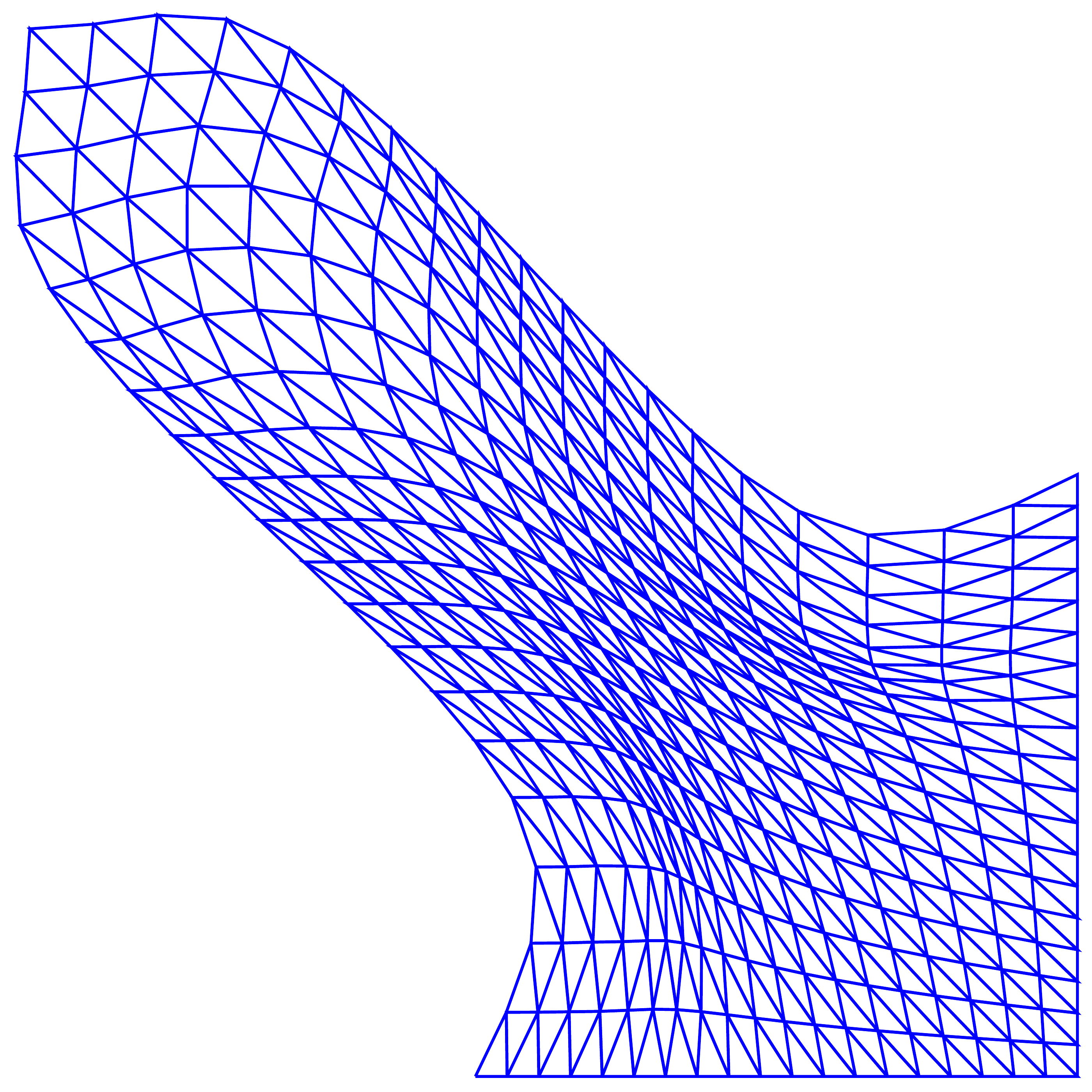}
    \end{subfigure}
    \begin{subfigure}[b]{0.3\textwidth}
        \centering
        \includegraphics[width=\textwidth]{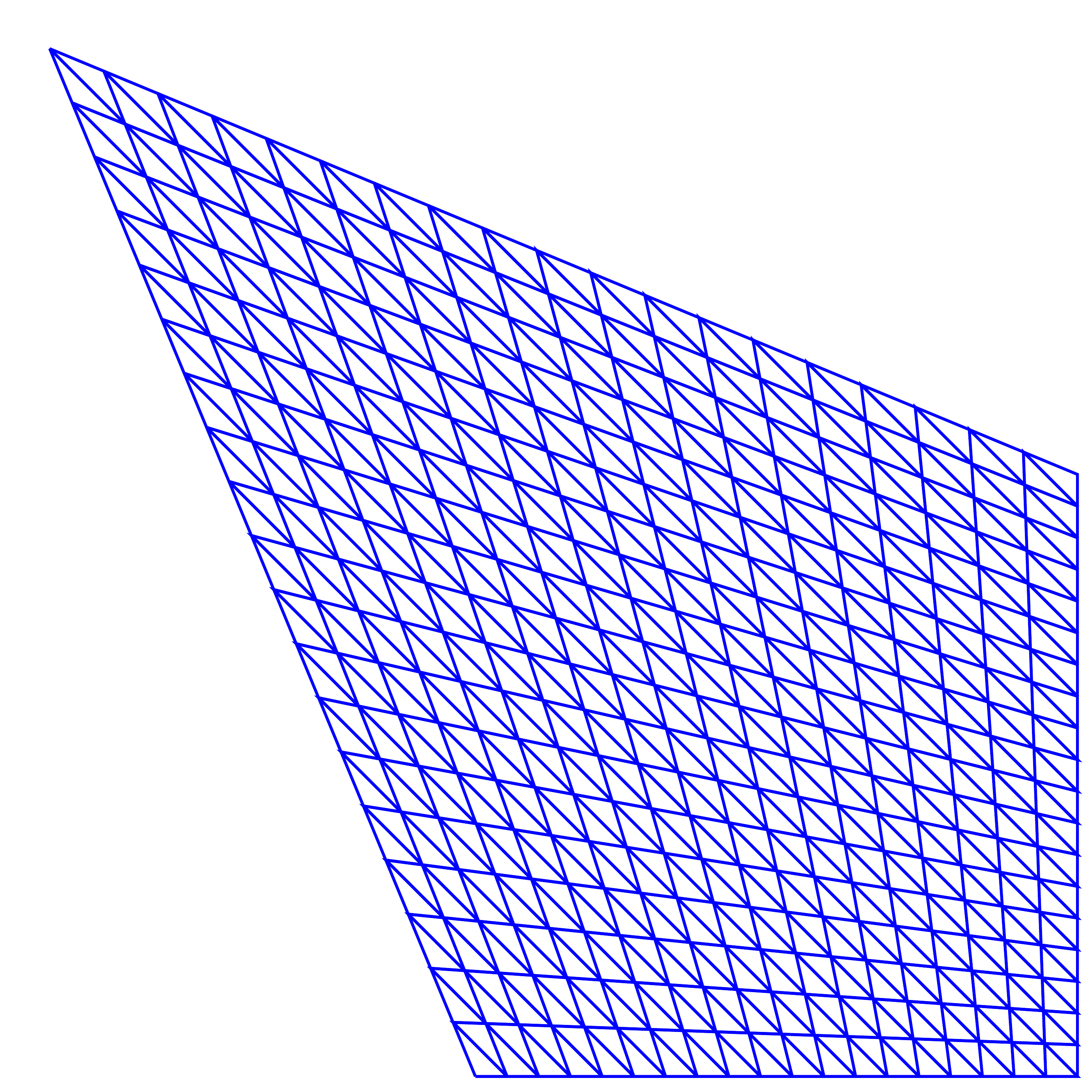}
    \end{subfigure}
    \begin{subfigure}[b]{0.3\textwidth}
        \centering
        \includegraphics[width=\textwidth]{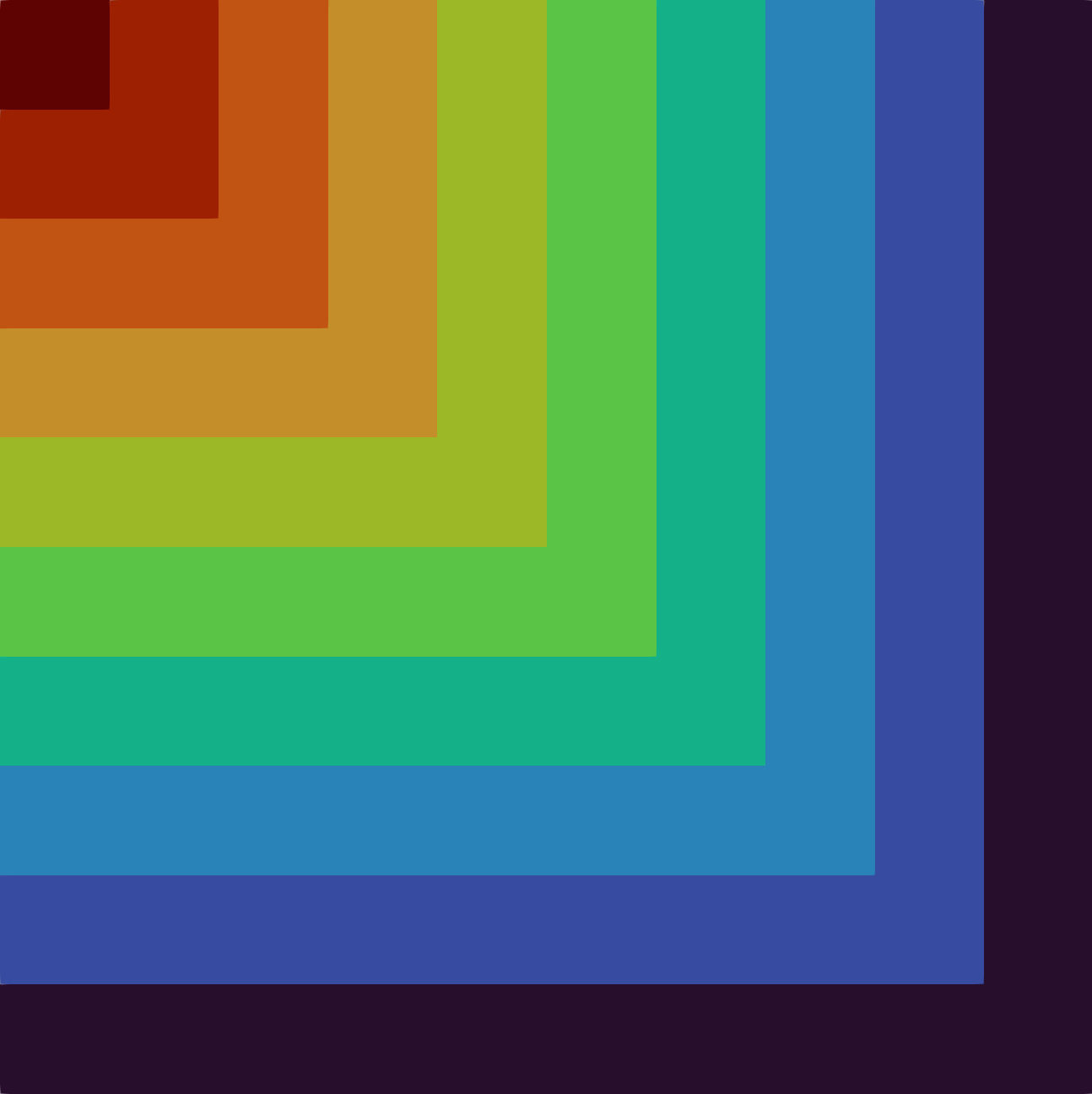}
    \end{subfigure}
    \begin{subfigure}[b]{0.3\textwidth}
        \centering
        \includegraphics[width=\textwidth]{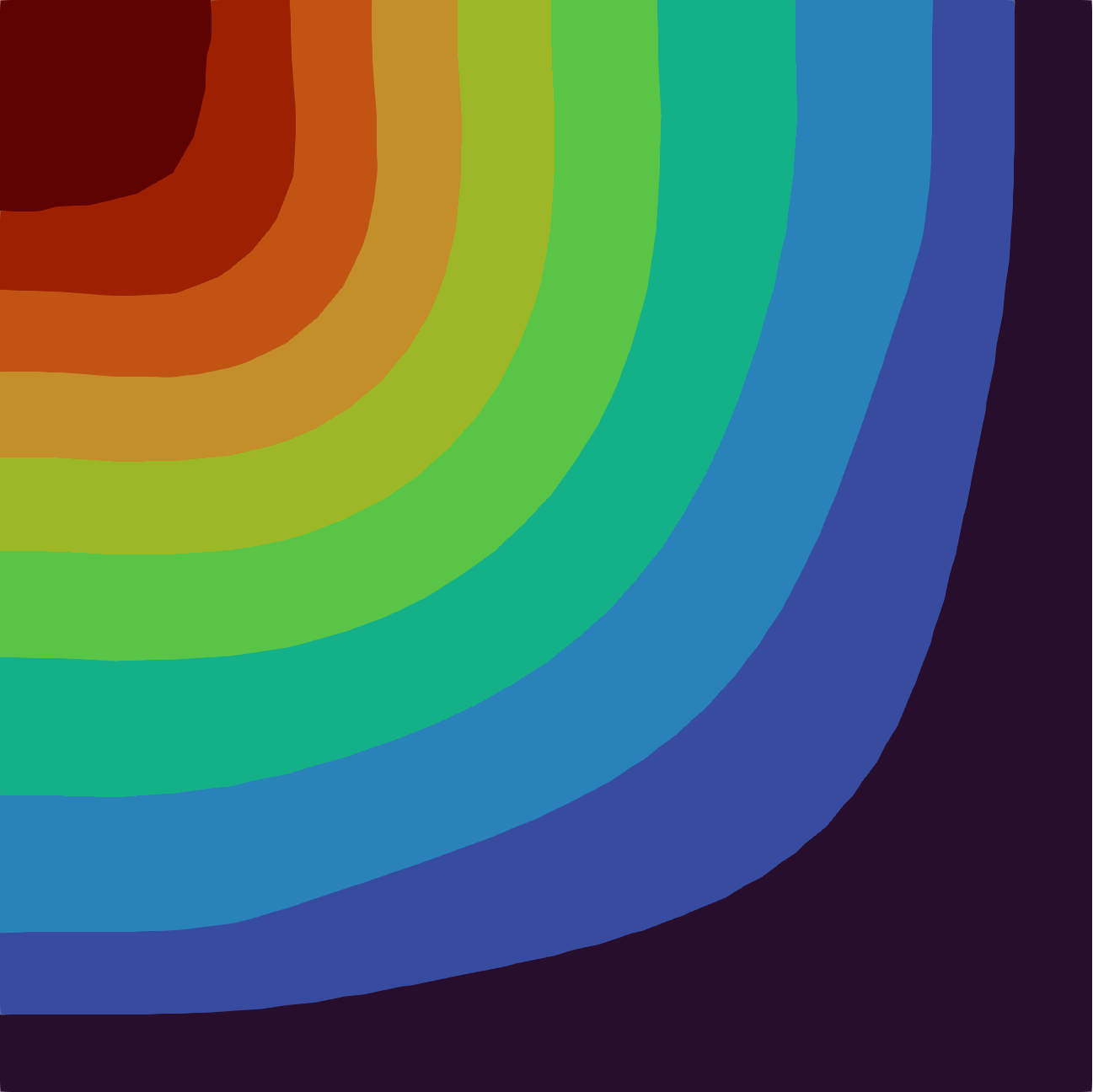}
    \end{subfigure}
    \begin{subfigure}[b]{0.3\textwidth}
        \centering
        \includegraphics[width=\textwidth]{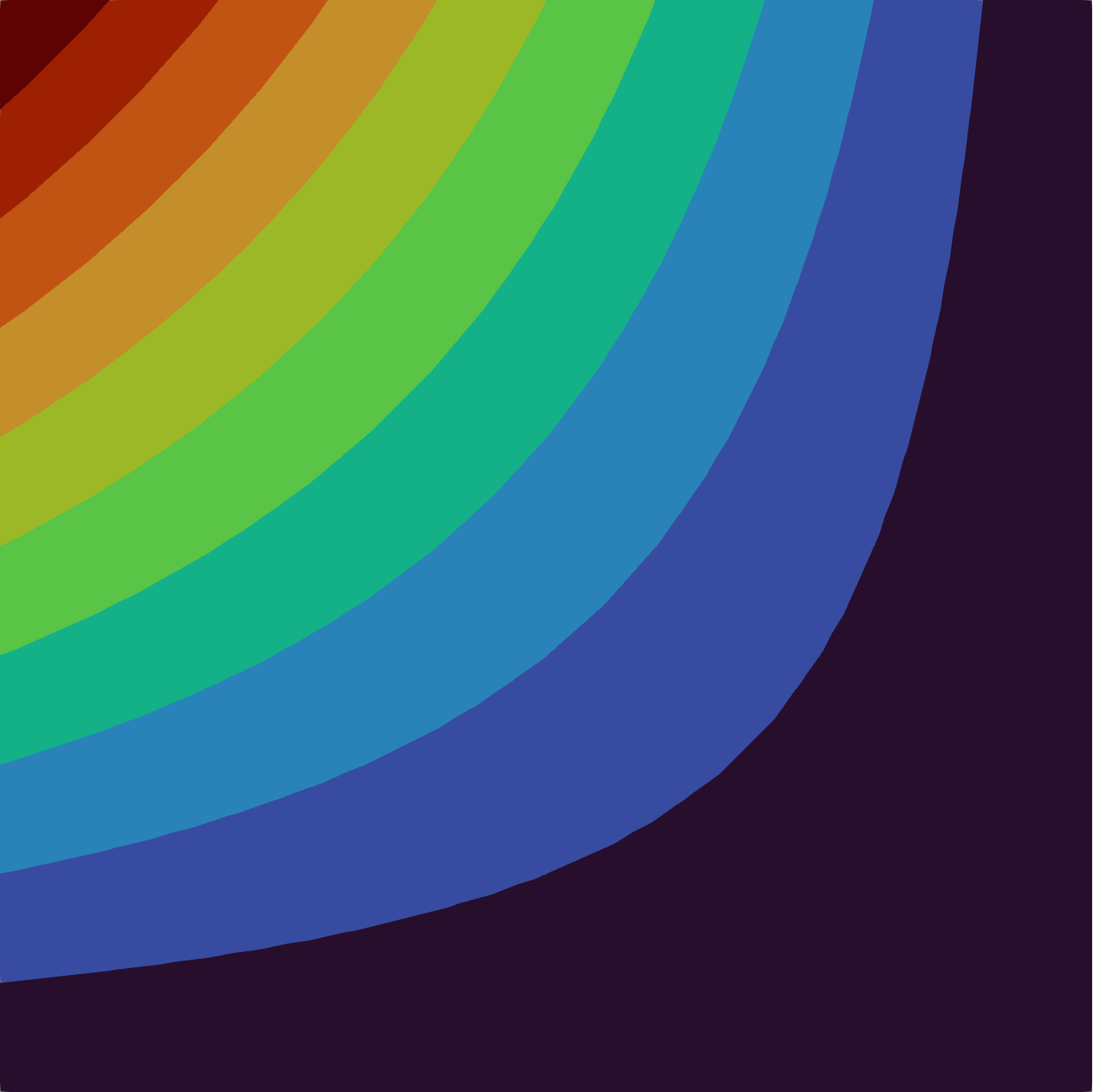}
    \end{subfigure}
    \begin{subfigure}[b]{0.3\textwidth}
        \centering
        \includegraphics[width=\textwidth]{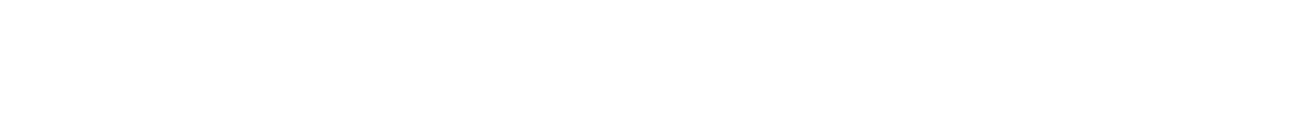}
        \caption{Expected limit.}
        \label{fig:InfSolArtificial}
    \end{subfigure}
    \begin{subfigure}[b]{0.3\textwidth}
        \centering
        \includegraphics[width=\textwidth]{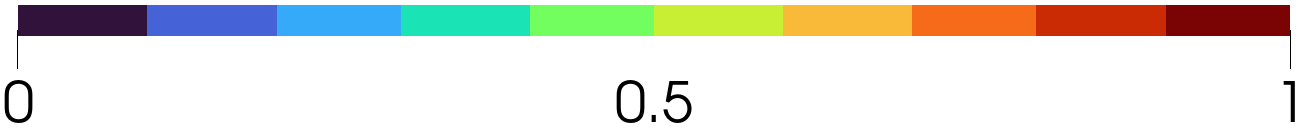}
        \caption{Inner norm $\lVert \cdot \rVert_2$.}
        \label{fig:InfSolLoisel}
    \end{subfigure}
    \begin{subfigure}[b]{0.3\textwidth}
        \centering
        \includegraphics[width=\textwidth]{images/DeformationField_LegendEmpty.png}
        \caption{Inner norm $\lVert \cdot \rVert_\infty$.}
        \label{fig:InfSolSupSup}
    \end{subfigure}
    \captionsetup{format=hang}
    \caption{Deformation of a square domain with $h(x) = (\sin(2\pi x_1) - \sin(2\pi x_2)) \cdot \eta$ prescribed on the left and upper boundary for different different algorithmic realizations of the $\infty$-Laplacian.}
    \label{fig:DifferentInfSols}
\end{figure}

% Inf Algorithm
\noindent Additionally to the algorithm for the finite setting, which we modified in section \ref{sec:HighOrderDescent}, an algorithm for the limit problem is proposed in \cite{loisel2020}.
There, the limit of problem (\ref{eq:pLaplaceZeroTraceMinimization}) is formulated as
\begin{equation}
    \underset{u\in \mathcal{U}^{\infty}}{\mathrm{arg\;min}}\; J_\infty(u) = \underbrace{\sup_{x \in \Omega} \lVert \nabla (u+g) \rVert}_{=: \lVert u+g \rVert_{X^\infty(\Omega)}} - \int_{\Gamma} h u \;\mathrm{d}\Gamma - \int_{\Omega} f u \;\mathrm{d} x.
    \label{eq:InfLaplaceMinimization}
\end{equation}
First, note that we leave the interpretation of the interior norm in the first term open for now.
While there is no actual derivation of the formulation given, we can state some arguments to consider it.
Interpreting the primary term in equation (\ref{eq:pLaplaceZeroTraceMinimization}) as $\lVert u+g \rVert_{X^p(\Omega)}^p$ one would obtain this primary term as $\lVert u+g \rVert_{X^\infty(\Omega)}$ in the same way classical limits for $\mathrm{L}^p$-norms are constructed.
By the theory of Lipschitz extensions \cite{jensen1993}, one can understand the $\infty$-Laplacian as the minimization of the sup norm of the gradient, given further justification to the idea.\\

\noindent We will not give the construction of the algorithm here again, since it is similar to the finite setting and the extensions are done in the same manner.
However, it is interesting that the reformulated discrete problem (\ref{eq:ReformulatedFiniteMinimization}) essentially becomes a problem for the $1$-Laplacian over the subspace of constant $s$ meaning that the Lipschitz constant on each element is bounded uniformly and the bound is minimized.
On the Neumann boundary this approach would result in $v\rvert_{\Gamma} = 0$ and thus we add the constraint $s \geq 1$ in order to obtain the desired slope on the boundary.\\

\noindent From the sequence of solutions for finite $p$ and the observations in 1D combined with the theoretical definition of the AMLE, we expect the limit artificially shown in figure \ref{fig:InfSolArtificial}.
Figure \ref{fig:InfSolLoisel} shows the result for the choice of the inner norm $\lVert \cdot \rVert_2$ in equation (\ref{eq:InfLaplaceMinimization}) as proposed in \cite{loisel2020}.
It overshoots the intended tip slightly, yielding an improvement to the solution for $p=15$ from figure \ref{fig:FiniteDeformations}.
But it is not able to resolve the intended free boundaries, leading to worse results than the finite setting.
However, the grid quality at those boundaries remains good and instead the center becomes significantly worse.
By the method of manufactured solutions, we can also show that it is not able to yield the unique solution for a reference problems with the well-known viscosity solution $v=x_1^{4/3}-x_2^{4/3}$ \cite{lindqvist2016}.
Another approach is choosing the supremum norm also for the inner norm, which we computed in figure \ref{fig:InfSolSupSup}.
While this choice yields the correct outer boundary, it still does not deform the interior mesh uniformly.\\
\section{Conclusion}
\label{sec:Conclusion}

We added support for Neumann boundary conditions to a present algorithm for the scalar $p$-Laplacian and proved that the theoretical estimate on the required Newton steps remains polynomial.
Further, we constructed the extension of the algorithm to vector-valued problems and performed numerical experiments including validation and shape deformations.
The results demonstrate that the extension is indeed applicable to problems occurring in $p$-harmonic shape optimization and yields solutions for higher-order $p>5$ without iterating over $p$.
Those provide further improvements in terms of preserved mesh quality and obtained boundary shape.
However, we saw that results obtained for the $\infty$-Laplacian are significantly different and even high-order solutions do not yield a sufficient approximation, especially since they depend on the magnitude of the source term.
First experiments with a modified algorithm to solve the $\infty$-Laplacian problem directly did not yield the desired results, but small changes could already achieve improvements, allowing the idea to be considered further.\\

\noindent For future research it remains to construct a proper algorithm for the limit problem.
On the other hand, the results for finite $p$ may be applied in shape optimization problems potentially including 3D settings.
This includes modifications to the implementation for high-performance computer architecture and analysis considering scalability.

\section*{Acknowledgment}
The authors acknowledge the support by the Deutsche Forschungsgemeinschaft (DFG) within the Research Training Group GRK 2583 ``Modeling, Simulation and Optimization of Fluid Dynamic Applications''.

\printbibliography
\end{document}